\documentclass[copyright,creativecommons]{eptcs}

\usepackage{breakurl}

\usepackage{amsmath,amsthm,amssymb}
\usepackage{xspace,xypic,enumerate,color}
\usepackage{tikzfig}
\xyoption{curve}

\tikzstyle{every picture}=[baseline=-0.25em,shorten <=-0.1pt]
\tikzstyle{dotpic}=[scale=0.5]


\tikzstyle{braceedge}=[decorate,decoration={brace,amplitude=1mm,raise=-1mm}]
\tikzstyle{left hook arrow}=[left hook-latex]
\tikzstyle{right hook arrow}=[right hook-latex]


\tikzstyle{dot}=[inner sep=0.7mm,minimum width=0pt,minimum height=0pt,fill=black,draw=black,shape=circle]

\tikzstyle{black dot}=[dot]
\tikzstyle{white dot}=[dot,fill=white]
\tikzstyle{gray dot}=[dot,fill=gray!40!white]

\tikzstyle{alt white dot}=[white dot,label={[xshift=3mm,yshift=-0.05mm,font=\tiny]left:$*$}]
\tikzstyle{alt gray dot}=[gray dot,label={[xshift=3mm,yshift=-0.05mm,font=\tiny]left:$*$}]

\tikzstyle{white norm}=[rectangle,fill=white,draw=black,minimum height=2mm,minimum width=2mm,inner sep=0pt,font=\small]
\tikzstyle{gray norm}=[white norm,fill=gray!40!white]
\tikzstyle{black norm}=[white norm,fill=black]



\tikzstyle{arrs}=[-latex,font=\small,auto]
\tikzstyle{arrow plain}=[arrs]
\tikzstyle{arrow dashed}=[dashed,arrs]
\tikzstyle{arrow bold}=[very thick,arrs]
\tikzstyle{arrow hide}=[draw=white!0,-]
\tikzstyle{arrow reverse}=[latex-]
\tikzstyle{cdnode}=[]


\tikzstyle{wide point}=[fill=white,draw=black,shape=isosceles triangle,shape border rotate=90,isosceles triangle stretches=true,inner sep=1pt,minimum width=1.5cm,minimum height=5mm]
\tikzstyle{wide copoint}=[fill=white,draw=black,shape=isosceles triangle,shape border rotate=-90,isosceles triangle stretches=true,inner sep=1pt,minimum width=1.5cm,minimum height=4mm]
\tikzstyle{very wide copoint}=[fill=white,draw=black,shape=isosceles triangle,Shape border rotate=-90,isosceles triangle stretches=true,inner sep=1pt,minimum width=2.5cm,minimum height=4mm]
\tikzstyle{very wide empty copoint}=[draw=black,shape=isosceles triangle,shape border rotate=-90,isosceles triangle stretches=true,inner sep=1pt,minimum width=2.5cm,minimum height=4mm]
\tikzstyle{symm}=[ultra thick,shorten <=-1mm,shorten >=-1mm]

\tikzstyle{square box}=[rectangle,fill=white,draw=black,minimum height=3mm,minimum width=3mm,font=\small]
\tikzstyle{square gray box}=[rectangle,fill=gray!30,draw=black,minimum height=6mm,minimum width=6mm]
\tikzstyle{point}=[regular polygon,regular polygon rotate=180,regular polygon sides=3,draw=black,scale=0.75,inner sep=-0.5pt,minimum width=.6cm,fill=white]
\tikzstyle{copoint}=[regular polygon,regular polygon sides=3,draw=black,scale=0.75,inner sep=-0.5pt,minimum width=1cm,fill=white]
\tikzstyle{gray point}=[point,fill=gray!40!white]
\tikzstyle{gray copoint}=[copoint,fill=gray!40!white]



\tikzstyle{diredge}=[->]
\tikzstyle{rdiredge}=[<-]
\tikzstyle{dashed edge}=[dashed]

\tikzstyle{cross}=[preaction={draw=white, -, line width=3pt}]

\tikzstyle{black point}=[regular polygon,regular polygon rotate=180,regular polygon sides=3,draw=black,scale=0.25,inner sep=-0.5pt,minimum width=1cm,fill=black]

\tikzstyle{grey point}=[regular polygon,regular polygon rotate=180,regular polygon sides=3,draw=black,scale=0.25,inner sep=-0.5pt,minimum width=1cm,fill=gray!40!white]

\tikzstyle{white point}=[regular polygon,regular polygon rotate=180,regular polygon sides=3,draw=black,scale=0.25,inner sep=-0.5pt,minimum width=1cm,fill=white]

\tikzstyle{wide white point}=[regular polygon,regular polygon rotate=180,regular polygon sides=3,draw=black,xscale=0.25,yscale=0.15,inner sep=-0.5pt,minimum width=15mm,fill=white]

\tikzstyle{wide white copoint}=[regular polygon,regular polygon rotate=0,regular polygon sides=3,draw=black,xscale=0.25,yscale=0.15,inner sep=-0.5pt,minimum width=15mm,fill=white]

\tikzstyle{black copoint}=[regular polygon,regular polygon rotate=0,regular polygon sides=3,draw=black,scale=0.25,inner sep=-0.5pt,minimum width=1cm,fill=black]

\tikzstyle{grey copoint}=[regular polygon,regular polygon rotate=0,regular polygon sides=3,draw=black,scale=0.25,inner sep=-0.5pt,minimum width=1cm,fill=gray!40!white]

\tikzstyle{white copoint}=[regular polygon,regular polygon rotate=0,regular polygon sides=3,draw=black,scale=0.25,inner sep=-0.5pt,minimum width=1cm,fill=white]

\newcommand{\subsystemcolour}{gray!40!white}
\pgfkeys{/tikz/.cd, subsystemcolour/.store in=\subsystemcolour}
\tikzstyle{subsystem}=[postaction={decorate,decoration={markings,
    mark=at position .4 with {\arrow[rotate=180, fill=\subsystemcolour]{triangle 60}}}}]
\tikzstyle{None}=[circle, fill=white, inner sep=0pt]



\newcommand{\dotunit}[1]{%
\,\begin{tikzpicture}[dotpic,yshift=1.5mm]
\node [#1] (a) at (0,-0.25) {}; 
\draw [diredge] (a)--(0,0.2);
\end{tikzpicture}\,}

\newcommand{\dotmult}[1]{%
\,\begin{tikzpicture}[dotpic]
	\node [#1] (a) {};
	\draw [diredge] (a) -- (90:0.55);
	\draw [diredge] (a) (-45:0.6) -- (a);
	\draw [diredge] (a) (-135:0.6) -- (a);
\end{tikzpicture}\,}

\newcommand{\dotaction}[1]{%
\,\begin{tikzpicture}[dotpic,yshift=0.5mm]
	\node [#1] (a) {};
	\draw [diredge] (-90:0.55)--(a);
	\draw [diredge] (a) -- (45:0.6);
	\draw [rdiredge] (a) -- (135:0.6);
\end{tikzpicture}\,}
\newcommand{\dotcoaction}[1]{%
\,\begin{tikzpicture}[dotpic]
	\node [#1] (a) {};
	\draw [diredge] (a) -- (90:0.55);
	\draw [diredge] (a) (-45:0.6) -- (a);
	\draw [rdiredge] (a) (-135:0.6) -- (a);
\end{tikzpicture}\,}

\newcommand{\dotdualmult}[1]{%
\!\begin{tikzpicture}[dotpic]
		\node [style=white dot] (0) at (0, 0.3) {};
		\node [style=none] (1) at (-0.5, -0.4) {};
		\node [style=none] (2) at (0.5, -0.4) {};
		\node [style=none] (3) at (0, 0.8) {};
		\draw [style=diredge] (3.center) to (0);
		\draw [style=diredge, in=15, out=-30, looseness=1.50] (0) to (1.center);
		\draw [style=diredge, in=165, out=-150, looseness=1.50] (0) to (2.center);
\end{tikzpicture}\!}



%

\newcommand{\dotcap}[1]{%
\,\begin{tikzpicture}[dotpic,yshift=2.5mm]
	\node [#1] (a) at (0,0) {};
	\draw [bend right,diredge] (a) to (-0.4,-0.6);
	\draw [bend left,diredge] (a) to (0.4,-0.6);
\end{tikzpicture}\,}

\newcommand{\dotnorm}[1]{%
\,\begin{tikzpicture}[dotpic,yshift=0.4mm]
		\node [style=none] (0) at (0, -0.4) {};
		\node [style=#1] (1) at (0, -0) {};
		\node [style=none] (2) at (0, 0.5) {};
		\draw (0.center) to (1);
		\draw [style=diredge] (1) to (2.center);
\end{tikzpicture}\,}
\newcommand{\dotconorm}[1]{%
\,\begin{tikzpicture}[dotpic,yshift=0.4mm]
		\node [style=none] (0) at (0, -0.4) {};
		\node [style=white norm] (1) at (0, 0.1) {};
		\node [style=none] (2) at (0, 0.5) {};
		\draw [style=diredge] (1) to (0.center);
		\draw (2.center) to (1);
\end{tikzpicture}\,}

\newcommand{\splittriangle}{%
\,\begin{tikzpicture}[dotpic]
		\node [style=wide white point] (0) at (0, 0) {};
                \draw [style=diredge] (0.north east) to ([yshift=3mm]0.north east);
                \draw [style=diredge] ([yshift=3mm]0.north west) to (0.north west);
                \draw [style=diredge] ([yshift=-3mm]0.south) to (0.south);
\end{tikzpicture}\,}

\newcommand{\splitcotriangle}{%
\,\begin{tikzpicture}[dotpic]
		\node [style=wide white copoint] (0) at (0, 0) {};
                \draw [style=diredge] (0.south east) to ([yshift=-3mm]0.south east);
                \draw [style=diredge] ([yshift=-3mm]0.south west) to (0.south west);
                \draw [style=diredge] ([yshift=3mm]0.north) to (0.north);
\end{tikzpicture}\,}





\newcommand{\blackunit}{\dotunit{dot}}

\newcommand{\blackmult}{\dotmult{dot}}

\newcommand{\blackaction}{\dotaction{dot}}
\newcommand{\blackcoaction}{\dotcoaction{dot}}
\newcommand{\blacknorm}{\dotnorm{black norm}}

\newcommand{\blackcap}{\dotcap{dot}}


\newcommand{\whiteunit}{\dotunit{white dot}}

\newcommand{\whitemult}{\dotmult{white dot}}

\newcommand{\whitecap}{\dotcap{white dot}}

\newcommand{\whitenorm}{\dotnorm{white norm}}

\newcommand{\whiteaction}{\dotaction{white dot}}
\newcommand{\whitecoaction}{\dotcoaction{white dot}}




\newcommand{\graynorm}{\dotnorm{gray norm}}








\let\olddagger\dagger
\renewcommand{\dagger}{\ensuremath{\olddagger}\xspace}

\newcommand{\cat}[1]{\ensuremath{\mathbf{#1}}\xspace}
\newcommand{\FHilb}{\cat{FHilb}}
\newcommand{\Hilb}{\cat{Hilb}}
\newcommand{\Rel}{\cat{Rel}}

\newcommand{\CP}{\ensuremath{\mathrm{CP}\xspace}}
\newcommand{\CPs}{\ensuremath{\CP^*}\xspace}
\newcommand{\CPsn}{\ensuremath{\CP^*_{\text{n}}}\xspace}
\newcommand{\CPM}{\ensuremath{\mathrm{CPM}}\xspace}
\newcommand{\Split}[1]{\ensuremath{\mathrm{Split}_{#1}}\xspace}
\newcommand{\Splitd}{\ensuremath{\mathrm{Split}^\dag}\xspace}
\newcommand{\V}{\cat{V}}
\newcommand{\C}{\ensuremath{\mathbb{C}}}
\newcommand{\inprod}[2]{\ensuremath{\langle #1\,|\,#2 \rangle}}
\DeclareMathOperator{\Tr}{Tr}
\newcommand{\id}[1][]{\ensuremath{1_{#1}}}
\DeclareMathOperator{\Ob}{Ob}
\DeclareMathOperator{\Mor}{Mor}
\DeclareMathOperator{\dom}{dom}
\DeclareMathOperator{\cod}{cod}

\newcommand{\I}{\ensuremath{\mathcal{I}}}

\theoremstyle{plain}
\newtheorem{theorem}{Theorem}[section]
\newtheorem{corollary}[theorem]{Corollary}
\newtheorem{lemma}[theorem]{Lemma}
\newtheorem{proposition}[theorem]{Proposition}

\theoremstyle{definition}
\newtheorem{definition}[theorem]{Definition}

\newtheorem{example}[theorem]{Example}
\newtheorem{remark}[theorem]{Remark}


\tikzstyle{cdiag}=[matrix of math nodes, row sep=3em, column sep=3em, text height=1.5ex, text depth=0.25ex,inner sep=0.5em]
\tikzstyle{arrow above}=[transform canvas={yshift=0.5ex}]
\tikzstyle{arrow below}=[transform canvas={yshift=-0.5ex}]


\newcommand{\vkcleararrows}{%
\tikzstyle{vkarrow1}=[arrow plain]
\tikzstyle{vkarrow2}=[arrow plain]
\tikzstyle{vkarrow3}=[arrow plain]
\tikzstyle{vkarrow4}=[arrow plain]
\tikzstyle{vkarrow5}=[arrow plain]
\tikzstyle{vkarrow6}=[arrow plain]
\tikzstyle{vkarrow7}=[arrow plain]
\tikzstyle{vkarrow8}=[arrow plain]
\tikzstyle{vkarrow9}=[arrow plain]
\tikzstyle{vkarrow10}=[arrow plain]
\tikzstyle{vkarrow11}=[arrow plain]
\tikzstyle{vkarrow12}=[arrow plain]}
\vkcleararrows

\newcommand{\eq}{\ensuremath{\mathop{\sim}}}
\newcommand{\Eq}{\ensuremath{\mathop{\approx}}}
\newcommand{\B}{\ensuremath{\mathcal{B}}}
\newcommand{\M}{\mathbb{M}}
\DeclareMathOperator{\Dom}{D}

\title{Completely positive projections and biproducts}
\author{Chris Heunen
\institute{Department of Computer Science\\ University of Oxford}
\email{heunen@cs.ox.ac.uk}
\and Aleks Kissinger
\institute{Department of Computer Science\\ University of Oxford}
\email{alek@cs.ox.ac.uk}
\and
Peter Selinger
\institute{Department of Mathematics and Statistics\\ Dalhousie University}
\email{selinger@mathstat.dal.ca}
}

\begin{document}
\maketitle
\begin{abstract}
  The recently introduced CP*--construction unites quantum channels
  and classical systems, subsuming the earlier CPM--construction in
  categorical quantum mechanics. We compare this construction to two
  earlier attempts at solving this problem: freely adding biproducts
  to CPM, and freely splitting idempotents in CPM. The CP*--construction embeds the
  former, and embeds into the latter, but neither embedding is an
  equivalence in general.
\end{abstract}

\section{Introduction}

Two of the authors recently introduced the so-called
\emph{CP*--construction}, turning a 
category $\V$ of abstract state spaces into a category $\CPs[\V]$
of abstract C*-algebras and completely positive
maps~\cite{coeckeheunenkissinger:cpstar}. It accommodates 
both quantum channels and classical systems in a single
category. Moreover, it allows nonstandard models connecting to the
well-studied theory of groupoids. In particular, it subsumes the
earlier CPM--construction, which gives the subcategory $\CPM[\V]$ of
the CP*--construction of abstract matrix
algebras~\cite{selinger:completelypositive}, and adds classical
information to it. 

There have been earlier attempts at uniting quantum channels and
classical systems~\cite{selinger:completelypositive,selinger:daggeridempotents}. This paper
compares the CP*--construction to two of them: freely adding
biproducts to $\CPM[\V]$, and freely splitting the dagger idempotents
of $\CPM[\V]$. These new categories are referred to as
$\CPM[\V]^\oplus$ and $\Split{}[\CPM[\V]]$, respectively. We will prove
that the CP*--construction lies in between these two: there are full
and faithful functors
\[
  \CPM[\V]^\oplus \to \CPs[\V] \to \Split{}[\CPM[\V]].
\]
When $\V$ is the category of finite-dimensional Hilbert spaces, both
outer categories provide ``enough space'' to reason about classical
and quantum data, because any finite-dimensional C*-algebra is a
direct sum of matrix algebras (as in $\CPM[\FHilb]^\oplus$), and a
certain orthogonal subspace of a larger matrix algebra (as in
$\Split{}[\CPM[\FHilb]]$). However, a priori it is unclear whether the
second construction captures too much: it may contain many more
objects than simply mixtures of classical and quantum state
spaces, although none have been discovered so
far~\cite[Remark~4.9]{selinger:daggeridempotents}. On the other  
hand, for $\V\neq\FHilb$, the first construction may not capture
enough: there may be interesting objects that are not just sums of
quantum systems. For this reason, it is interesting to study
$\CPs[\V]$, because the nonstandard models suggest it captures
precisely the right amount of interesting objects.

To be a bit more precise, we will prove that if $\V$ has biproducts,
then $\CPs[\V]$ inherits them. The universal property of the
free biproduct completion then guarantees the first embedding
above. We will show that this embedding is not an equivalence in
general. 

For the second embedding, we construct the associated
dagger idempotent of an object in $\CPs[\V]$, and prove that the notions of
complete positivity in $\CPs[\V]$ and $\Split{}[\CPM[\V]]$
coincide, giving rise to a full and faithful functor. Finally, we will
show that this embedding is not an equivalence in general either.

\subsection*{The CPM and CP*--constructions}

To end this introduction, we very briefly recall the CPM and CP*--constructions. For
more information, we refer to~\cite{coeckeheunenkissinger:cpstar}.
Let $\V$ be a dagger compact category
(see~\cite{abramskycoecke:cqm,selinger:completelypositive}). 
A morphism $f \colon A^* \otimes A \to B^* \otimes B$ is called \emph{completely positive} when
it is of the form
\ctikzfig{cpm}
for some object $C$ and some morphism $g \colon A \to C \otimes B$. The category $\CPM[\V]$ has the same objects and composition as $\V$, but a morphism $A \to B$ in $\CPM[\V]$ is a completely positive morphism $A^* \otimes A \to B^* \otimes B$ in $\V$.
The two main theorems about $\CPM[\V]$ are the following~\cite{selinger:completelypositive}.
First, $\CPM[\V]$ is again a dagger compact category with monoidal structure inherited from $\V$.
Second, $\CPM[\FHilb]$ is the category whose objects are finite-dimensional Hilbert spaces and whose morphisms completely positive maps as usually defined in quantum information theory.

A \emph{dagger Frobenius algebra} is an object $A$ together with
morphisms $\whitemult \colon A 
\otimes A \to A$ and $\whiteunit \colon I \to A$ satisfying:
\ctikzfig{fa_axioms}
Any dagger Frobenius algebra defines a cap and a cup satisfying the snake
identities. 
\ctikzfig{cap_cup}
A map $z \colon A \rightarrow A$ is \emph{central} for $\whitemult$
when: 
\ctikzfig{central}
A map $g \colon A \to A$ is \emph{positive} if $g=h^\dag \circ h$
for some $h$.  
A dagger Frobenius algebra $(A, \whitemult, \whiteunit)$ is
\emph{normalisable} if it comes with a central, positive isomorphism
$\whitenorm$ with:
\ctikzfig{normalisable}
A normalisable dagger Frobenius algebra is \emph{normal} when
$\whitenorm=\id[A]$. 

\begin{remark}
  It will often be more convenient to use the \textit{action} and
  \textit{coaction} maps associated with a Frobenius algebra, defined
  as follows: 
  \ctikzfig{action_abbrev}
  Using these maps, we can prove alternative forms of the Frobenius
  and normalisability equations (see Lemmas~2.9 and 2.10 of
  {\cite{coeckeheunenkissinger:cpstar}}). These are: 
  \begin{equation}\label{eqn-lemma-2.9-2.10}
\beginpgfgraphicnamed{sfa_ident}
\begin{tikzpicture}[dotpic]
	\begin{pgfonlayer}{nodelayer}
		\node [style=white dot] (0) at (-1.25, 0.5) {};
		\node [style=white dot] (1) at (-1.25, -0.25) {};
		\node [style=none] (2) at (-1.75, -0.75) {};
		\node [style=none] (3) at (-0.75, -0.75) {};
		\node [style=none] (4) at (-1.75, 1) {};
		\node [style=none] (5) at (-0.75, 1) {};
		\node [style=none] (6) at (2.5, 1) {};
		\node [style=none] (7) at (2.5, -0.75) {};
		\node [style=white dot] (8) at (1.25, -0) {};
		\node [style=none] (9) at (1.25, -0.75) {};
		\node [style=white dot] (10) at (2.5, -0) {};
		\node [style=none] (11) at (1.25, 1) {};
		\node [style=none] (12) at (0, -0) {$=$};
	\end{pgfonlayer}
	\begin{pgfonlayer}{edgelayer}
		\draw [style=diredge, bend right=15] (4.center) to (0);
		\draw [style=diredge, bend right=15] (0) to (5.center);
		\draw [style=diredge, bend right=15] (3.center) to (1);
		\draw [style=diredge, bend right=15] (1) to (2.center);
		\draw [style=diredge] (1) to (0);
		\draw [style=diredge] (11.center) to (8);
		\draw [style=diredge] (8) to (9.center);
		\draw [style=diredge] (7.center) to (10);
		\draw [style=diredge] (10) to (6.center);
		\draw [style=diredge, bend right=60, looseness=1.25] (10) to (8);
	\end{pgfonlayer}
\end{tikzpicture}}
\endpgfgraphicnamed
    \qquad\text{and}\qquad
\beginpgfgraphicnamed{norm_alt}
\begin{tikzpicture}[dotpic]
	\begin{pgfonlayer}{nodelayer}
		\node [style=none] (0) at (1, -1.25) {};
		\node [style=none] (1) at (1, 1.25) {};
		\node [style=none] (2) at (-1.5, 1.5) {};
		\node [style=none] (3) at (-1.5, -1.5) {};
		\node [style=none] (4) at (0, -0) {$=$};
		\node [style=white dot] (5) at (-1.5, 0.75) {};
		\node [style=white dot] (6) at (-1.5, -0.75) {};
		\node [style=white norm] (7) at (-2, -0) {};
		\node [style=white norm] (8) at (-1, -0) {};
	\end{pgfonlayer}
	\begin{pgfonlayer}{edgelayer}
		\draw [style=diredge] (0.center) to (1.center);
		\draw [style=diredge] (5) to (2.center);
		\draw [style=diredge] (3.center) to (6);
		\draw [style=diredge, in=-90, out=30] (6) to (8);
		\draw [style=diredge, in=-30, out=90] (8) to (5);
		\draw [style=diredge, in=90, out=-150] (5) to (7);
		\draw [style=diredge, in=150, out=-90] (7) to (6);
	\end{pgfonlayer}
\end{tikzpicture}}
\endpgfgraphicnamed
  \end{equation}
\end{remark}

Finally, the category $\CPs[\V]$ is defined as follows. Objects are
normalisable dagger Frobenius algebras in $\V$. Morphisms
$(A,\whitemult,\whiteunit,\whitenorm) \to
(B,\blackmult,\blackunit,\blacknorm)$ are morphisms $f \colon A \to B$ in
$\V$ such that $\blackaction\circ f\circ\whitecoaction$ is \emph{completely
positive}, i.e., such that
\begin{equation}\label{eqn-f-cp}
\beginpgfgraphicnamed{cpstar_condition}
\begin{tikzpicture}[dotpic]
	\begin{pgfonlayer}{nodelayer}
		\node [style=none] (0) at (0, -0) {$=$};
		\node [style=square box] (1) at (-2, -0) {$f$};
		\node [style=black dot] (2) at (-2, 1) {};
		\node [style=none] (3) at (-1.25, 2) {};
		\node [style=none] (4) at (-2.75, 2) {};
		\node [style=white dot] (5) at (-2, -1) {};
		\node [style=none] (6) at (-1.25, -2) {};
		\node [style=none] (7) at (-2.75, -2) {};
		\node [style=square box, minimum width=1 cm, minimum height=0.75 cm] (8) at (2, -0) {$g_*$};
		\node [style=none] (9) at (4.5, -0.75) {};
		\node [style=none] (10) at (2, -0.75) {};
		\node [style=none] (11) at (2, -1.75) {};
		\node [style=none] (12) at (4.5, -1.75) {};
		\node [style=none] (13) at (5, 0.75) {};
		\node [style=none] (14) at (1.5, 0.75) {};
		\node [style=none] (15) at (5, 2) {};
		\node [style=none] (16) at (1.5, 2) {};
		\node [style=square box, minimum width=1 cm, minimum height=0.75 cm] (17) at (4.5, -0) {$g$};
		\node [style=none] (18) at (2.5, 0.75) {};
		\node [style=none] (19) at (4, 0.75) {};
	\end{pgfonlayer}
	\begin{pgfonlayer}{edgelayer}
		\draw [style=diredge] (1) to (2);
		\draw [style=diredge, bend right] (4.center) to (2);
		\draw [style=diredge, bend right] (2) to (3.center);
		\draw [style=diredge, bend right] (5) to (7.center);
		\draw [style=diredge, bend right] (6.center) to (5);
		\draw [style=diredge] (5) to (1);
		\draw [style=diredge] (12.center) to (9.center);
		\draw [style=diredge] (13.center) to (15.center);
		\draw [style=diredge] (10.center) to (11.center);
		\draw [style=diredge] (16.center) to (14.center);
		\draw [style=diredge, in=90, out=90, looseness=1.50] (19.center) to (18.center);
	\end{pgfonlayer}
\end{tikzpicture}}
\endpgfgraphicnamed
\end{equation}
for some object $X$ and morphism $g \colon A \to X \otimes B$ in $\V$.
This category inherits the dagger compact structure from $\V$~\cite[Theorem~3.4]{coeckeheunenkissinger:cpstar}.
We write $\CPsn[\V]$ for the full subcategory whose objects are normal
dagger Frobenius algebras in $\V$.  
Recall that $\CPM[\V]$ has the same objects as $\V$, and morphisms $A
\to B$ are completely positive maps $A^* \otimes A \to B^* \otimes B$~\cite[Definition~4.18]{selinger:completelypositive}.

\begin{lemma}\label{lem:normal}
  Any normalisable dagger Frobenius algebra in $\V$ is 
  isomorphic in $\CPs[\V]$ to a normal one.
\end{lemma}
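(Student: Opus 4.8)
The plan is to construct a normal dagger Frobenius algebra on the same underlying object $A$ by rescaling the multiplication with $\whitenorm$, and then to check that the identity of $A$ is the desired isomorphism in $\CPs[\V]$.

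Write $\mathbb{A}=(A,\whitemult,\whiteunit,\whitenorm)$ for the given algebra and let $\mathbb{A}'$ be the candidate with underlying object $A$, multiplication $\whitenorm\circ\whitemult$, unit $\whitenorm^{-1}\circ\whiteunit$ and normaliser $\id[A]$; its comultiplication is then forced to be $\whitecomult\circ\whitenorm$, since $\whitenorm$ is positive and hence self-adjoint. Because $\whitenorm$ is central for $\whitemult$ it slides freely through $\whitemult$ and, dually, through $\whitecomult$, and with this one verifies routinely that $\mathbb{A}'$ is again a dagger Frobenius algebra: the associativity, unit and Frobenius laws for its operations reduce to those of $\mathbb{A}$ after collecting the copies of $\whitenorm$. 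Note moreover that $\mathbb{A}$ and $\mathbb{A}'$ induce the same cup and cap, because the $\whitenorm^{-1}$ in the unit of $\mathbb{A}'$ cancels the $\whitenorm$ in its multiplication. Finally $\id[A]$ is trivially central, positive and invertible, and substituting the operations of $\mathbb{A}'$ into the normalisability equation \eqref{eqn-lemma-2.9-2.10} with this normaliser yields $\whitenorm\circ(\whitemult\circ\whitecomult)\circ\whitenorm=\id[A]$; as $\whitenorm$ commutes with the central endomorphism $\whitemult\circ\whitecomult$, this equals $\whitenorm^2\circ(\whitemult\circ\whitecomult)$, which is exactly what the normalisability equation of $\mathbb{A}$ asserts (it places one copy of $\whitenorm$ on each leg of $\whitecomult$). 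Hence $\mathbb{A}'$ is normal.

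It remains to show that $\id[A]$ is an isomorphism $\mathbb{A}\to\mathbb{A}'$ in $\CPs[\V]$. Since $\id[A]$ is its own inverse in $\V$ and composition in $\CPs[\V]$ is inherited from $\V$, it suffices to check that $\id[A]$ satisfies the complete positivity condition \eqref{eqn-f-cp} both as a morphism $\mathbb{A}\to\mathbb{A}'$ and as a morphism $\mathbb{A}'\to\mathbb{A}$. Unwinding \eqref{eqn-f-cp}, the morphism whose complete positivity must be witnessed is $\blackaction\circ\id[A]\circ\whitecoaction$, the action coming from the codomain and the coaction from the domain, built from their (co)multiplications and from cups and caps that --- by the observation above --- are common to $\mathbb{A}$ and $\mathbb{A}'$. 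Hence in each of the two directions this composite differs from the one for the identity endomorphism of $\mathbb{A}$ --- which is certainly a morphism of $\CPs[\V]$, say witnessed by some $g\colon A\to X\otimes A$ --- only by a single occurrence of $\whitenorm$: entering through the action of the codomain in the first direction, and through the coaction of the domain in the second. As $\whitenorm$ is central, this occurrence can be slid off the relevant (co)multiplication onto the ancilla wire carrying $X$ in \eqref{eqn-f-cp}; using self-adjointness of $\whitenorm$ on the conjugate copy of the witness, it is there absorbed into a new witness (of the form $(\whitenorm\otimes\id[A])\circ g$, with $\whitenorm$ acting on $X$). This re-presents the composite in the form \eqref{eqn-f-cp}, so $\id[A]$ is a morphism in both directions, hence an isomorphism in $\CPs[\V]$, and the lemma follows.

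I expect this last step to be the main obstacle: the diagram chase rewriting $\blackaction\circ\id[A]\circ\whitecoaction$ into the completely positive form \eqref{eqn-f-cp}, and in particular the check that the extra copy of $\whitenorm$ indeed lands on the ancilla wire rather than on one of the external wires, where complete positivity would not be manifest. That is where the explicit shapes of the (co)action maps and of the normalisability equation, together with centrality and self-adjointness of $\whitenorm$, do the work; the rest is routine bookkeeping with the Frobenius laws.
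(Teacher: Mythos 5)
Your construction is exactly the paper's: the same rescaled multiplication $\whitenorm\circ\whitemult$ and unit $\whitenorm^{-1}\circ\whiteunit$, the same use of centrality and self-adjointness of $\whitenorm$ to see that this is again a dagger Frobenius algebra with unchanged cap and cup (hence normal), and the same candidate isomorphism $\id[A]$. Up to and including the verification that the new algebra is normal, your argument matches the paper and is fine.

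The problem is in the step you yourself identify as the main obstacle, and it is a genuine gap rather than bookkeeping. After sliding the single occurrence of $\whitenorm$ onto the ancilla wire (which, note, already requires taking the \emph{canonical} witness for $\id[A]\colon\mathbb{A}\to\mathbb{A}$ supplied by the Frobenius law \eqref{eqn-lemma-2.9-2.10}, not an arbitrary $g$ --- centrality lets you slide through $\whitemult$, not through an unknown $g$), you claim the composite is witnessed by $g'=(\whitenorm\otimes\id[A])\circ g$. It is not: in \eqref{eqn-f-cp} the witness occurs twice, once plain and once conjugated, so $g'$ puts a copy of $\whitenorm$ on \emph{both} legs of the cap on $X$, and contracting across the cap produces $\whitenorm^\dag\circ\whitenorm=\whitenorm^2$ on the ancilla --- one copy too many. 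What is needed is a square root, and this is exactly where positivity of $\whitenorm$ (which you never invoke in this step, using only self-adjointness) is essential: writing $\whitenorm=\graynorm^\dag\circ\graynorm$ and sending $\graynorm$ to one leg and its conjugate to the other gives the correct witness, which is how the paper concludes. Positivity cannot be dispensed with: on the one-dimensional algebra in $\FHilb$ the map $-\id$ is central, self-adjoint, invertible and squares to the identity, yet $-\id_{\C}$ is not completely positive, so any argument for this step resting only on centrality and self-adjointness proves too much. Replace your witness by $(\graynorm\otimes\id[A])\circ g$ and the proof closes.
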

\begin{proof}
  For an object $(A,\whitemult,\whiteunit,\whitenorm)$ of
  $\CPs[\V]$, define $\blackmult = \whitenorm \circ \whitemult$ and
  $\blackunit = \whitenorm^{-1} \circ 
  \whiteunit$. It follows from centrality and self-adjointness of
  $\whitenorm$ that $(A,\blackmult,\blackunit)$ is a dagger Frobenius
  algebra in $\V$. Moreover $\blackcap = \whitecap$, and so
  $(A,\blackmult,\blackunit)$ is normal.
  Finally, $\id[A]$ is a well-defined morphism
  from $(A,\whitemult,\whiteunit,\whitenorm)$ to
  $(A,\blackmult,\blackunit,\id[A])$ in $\CPs[\V]$: if $\whitenorm = 
  \graynorm^\dag \circ \graynorm$, then 
  \ctikzfig{normal}
  where the second equality follows from (\ref{eqn-lemma-2.9-2.10}).
  The morphism $\id[A]$ is a unitary isomorphism.
\end{proof}

\begin{remark}\label{rem:CPsn}
  The previous lemma shows that $\CPs[\V]$ and $\CPsn[\V]$ are dagger
  equivalent. Therefore, all properties that we can prove in one
  automatically transfer to the other, as long as the properties in
  question are invariant under dagger equivalence. All results in this
  paper are of this kind, and hence we lose no generality by assuming
  that all normalisable dagger Frobenius algebras have been
  normalised.
\end{remark}

\section{Splitting idempotents}

This section exhibits a canonical full and faithful functor from
$\CPs[\V]$ into $\Splitd[\CPM[\V]]$. This functor is not an
equivalence for $\V=\Rel$. It is not known whether it is an equivalence for
$\V=\FHilb$.  However, we characterise its image, showing that the image
is equivalent to the full subcategory of $\Splitd[\CPM[\FHilb]]$
consisting of unital dagger idempotents.

\begin{definition}
  Let $\mathcal{I}$ be a class of pairs $(X,p)$, where $X$ is an object
  of $\V$, and $p \colon X \to X$ is a morphism in $\V$ satisfying
  $p^\dag=p=p \circ p$, called a \emph{dagger idempotent} or \emph{projection}.  
  The category $\Split{\I}[\V]$ has $\I$ as objects. Morphisms $(X,p) \to (Y,q)$ in
  $\Split{\I}[\V]$ are morphisms $f \colon X \to Y$ in $\V$ satisfying $f
  = q \circ f \circ p$.
\end{definition}

If $\I$ is closed under tensor, then $\Split{\I}[\V]$ is dagger compact~\cite[Proposition~3.16]{selinger:daggeridempotents}.
When $\I$ is the class of all dagger idempotents in $\V$, we also
write $\Splitd[\V]$ instead of $\Split{\I}[\V]$.

\begin{lemma}\label{lem:CPsintoSplit}
  Let $\V$ be any dagger compact category. Then there is a canonical functor $F
  \colon \CPs[\V] \to \Splitd[\CPM[\V]]$
  acting as $F(A,\whitemult,\whiteunit,\whitenorm) = (A,\whiteaction \circ
  \whitenorm \circ \whitenorm \circ \whitecoaction)$ on
  objects, and as $F(f)= \blackaction \circ \blacknorm \circ f \circ \whitenorm
  \circ \whitecoaction$ on morphisms. It is full, faithful,
  and strongly dagger symmetric monoidal. 
\end{lemma}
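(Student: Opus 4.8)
The plan is to verify each claimed property of $F$ in turn, relying throughout on the graphical calculus and on the alternative Frobenius/normalisability identities~(\ref{eqn-lemma-2.9-2.10}). By Remark~\ref{rem:CPsn} we may assume all algebras are normal, i.e.\ $\whitenorm=\id$; this simplifies the formulas to $F(A,\whitemult,\whiteunit,\id) = (A, \whiteaction\circ\whitecoaction)$ on objects and $F(f) = \blackaction\circ f\circ\whitecoaction$ on morphisms, and I would carry out the argument in that normalised setting, remarking once at the start that the general formulas follow by transport along the isomorphism of Lemma~\ref{lem:normal}. First I would check that $F$ lands where it is claimed to: that $p_A := \whiteaction\circ\whitecoaction$ is a dagger idempotent \emph{in $\CPM[\V]$} (not merely in $\V$). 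Self-adjointness is immediate since the action is the dagger of the coaction; idempotence $p_A\circ p_A = p_A$ is exactly the normal form of the Frobenius law, one of the identities in~(\ref{eqn-lemma-2.9-2.10}). One must also confirm $p_A$ is a legal $\CPM[\V]$--morphism, i.e.\ completely positive as a map $A^*\otimes A\to A^*\otimes A$; this is where the ``doubled'' nature of $\CPM$ enters, and it follows because $p_A$ arises from the Frobenius structure, which is built from a single map and its adjoint in the pattern required by the definition of complete positivity.

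Next I would check functoriality. Well-definedness of $F(f)$ as a morphism $(A,p_A)\to(B,p_B)$ in $\Splitd[\CPM[\V]]$ requires two things: that $F(f)$ is a $\CPM[\V]$--morphism $A^*\otimes A\to B^*\otimes B$, which is precisely the complete-positivity condition~(\ref{eqn-f-cp}) defining morphisms of $\CPs[\V]$ applied to $f$ (this is the crux of why the construction works at all — the $\CPs$ morphism condition is literally the $\CPM$ complete-positivity of $\blackaction\circ f\circ\whitecoaction$); and that $p_B\circ F(f)\circ p_A = F(f)$, which again reduces to the normal-form Frobenius identities absorbing the extra action/coaction pairs. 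Preservation of identities, $F(\id_A)=\blackaction\circ\whitecoaction=p_A$, is the identity-in-$\Splitd$. Preservation of composition, $F(g\circ f) = F(g)\circ F(f)$, unfolds to checking that $\blackaction\circ g\circ f\circ\whitecoaction = (\blackaction\circ g\circ\whitecoaction)\mathbin{;}(\whiteaction\circ f\,?)$ — here the apparent mismatch in the middle $A$--leg is resolved because composition in $\CPM[\V]$ of the two doubled maps produces exactly a $\whitecoaction$ followed by $\whiteaction$ on the shared object $A$, which by~(\ref{eqn-lemma-2.9-2.10}) is $p_A$, and $p_A$ is already absorbed. I expect this is the most bookkeeping-heavy step but entirely mechanical once the graphical identities are in hand.

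For fullness and faithfulness, I would argue as follows. \emph{Faithfulness}: if $F(f)=F(g)$, then $\blackaction\circ f\circ\whitecoaction = \blackaction\circ g\circ\whitecoaction$; precomposing with $\whiteaction$ and postcomposing with $\blackcoaction$ and using the snake/normal identities to cancel the action–coaction pairs recovers $f=g$. \emph{Fullness}: given $h\colon(A,p_A)\to(B,p_B)$ in $\Splitd[\CPM[\V]]$ — so $h$ is a completely positive $A^*\otimes A\to B^*\otimes B$ with $p_B\circ h\circ p_A = h$ — I would define $f := \blackcoaction\circ h\circ\whiteaction\colon A\to B$ in $\V$ (collapsing the doubled object via the Frobenius cap/cup), and verify $F(f)=h$ by inserting $p_B$ and $p_A$ and using $p_B\circ h\circ p_A=h$; I must also check $f$ is a genuine $\CPs[\V]$--morphism, i.e.\ that $\blackaction\circ f\circ\whitecoaction$ is completely positive, which holds because that composite equals $h$ (after the identities), and $h$ was assumed completely positive. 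The one subtlety here is making sure the $f$ I produce is independent of presentation and that $\blackcoaction\circ(-)\circ\whiteaction$ and $\blackaction\circ(-)\circ\whitecoaction$ are mutually inverse bijections between the relevant hom-sets once restricted by the idempotents — this is the real content, and it is again a consequence of the normalisable-Frobenius identities.

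Finally, for \emph{strong dagger symmetric monoidality}: $F$ preserves the dagger because $F(f)^\dag = (\blackaction\circ f\circ\whitecoaction)^\dag = \whiteaction\circ f^\dag\circ\blackcoaction = F(f^\dag)$, using that action and coaction are daggers of each other. For the monoidal structure one checks $p_{A\otimes B}$ agrees with $p_A\otimes p_B$ up to the canonical symmetry isomorphisms — this uses that the tensor of two dagger Frobenius algebras has action/coaction the tensor of the components' — and that the coherence isomorphisms of $\V$ descend; the symmetry is inherited since everything is built from the symmetric structure of $\V$. I expect the \textbf{main obstacle} to be the fullness argument, specifically confirming that $f:=\blackcoaction\circ h\circ\whiteaction$ really does satisfy $F(f)=h$ and is completely positive in the $\CPs$ sense: this is where one genuinely needs the idempotent condition $p_B\circ h\circ p_A=h$ together with the normal-form identities~(\ref{eqn-lemma-2.9-2.10}) to interact, rather than just applying snake equations formally. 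Everything else is a diagram chase of the kind the graphical calculus handles routinely.
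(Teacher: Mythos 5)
Your proposal follows essentially the same route as the paper's proof: well-definedness of the object $p=\whiteaction\circ\whitenorm\circ\whitenorm\circ\whitecoaction$ and of $F(f)$ via the identities~(\ref{eqn-lemma-2.9-2.10}) and~(\ref{eqn-f-cp}), faithfulness by the explicit retraction $f = \blacknorm\circ\blackcoaction\circ F(f)\circ\whiteaction\circ\whitenorm$, fullness by observing that $h\mapsto\blackcoaction\circ h\circ\whiteaction$ inverts $F$ on hom-sets using $q\circ h\circ p = h$, and inheritance of the dagger and symmetric monoidal structure from $\V$. The only cosmetic differences are that the paper carries the normalisers $\whitenorm$ explicitly rather than first reducing to the normal case, and that the composite arising in the middle of $F(g)\circ F(f)$ is $\blacknorm\circ\blackcoaction\circ\blackaction\circ\blacknorm=\id[B]$ by the normalisability identity (not the idempotent $p$, as you suggest), though the same equations~(\ref{eqn-lemma-2.9-2.10}) do the work either way.
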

\begin{proof}
  First, note that $p=F(A,\whitemult,\whiteunit,\whitenorm)$ is a
  well-defined object of $\Splitd[\CPM[\V]]$: clearly $p=\whiteaction
  \circ \whitenorm \circ \whitenorm \circ \whitecoaction=p^\dag$;
  also, it follows from (\ref{eqn-lemma-2.9-2.10}) that $p \circ p =
  p$ and that $p$ is completely positive. 
  Also, $F(f)$ is a well-defined morphism in
  $\CPM[\V]$ by (\ref{eqn-f-cp}). 
  By (\ref{eqn-lemma-2.9-2.10}), $F(f)$ is in fact a
  well-defined morphism in $\Splitd[\CPM[\V]]$. 
  Next, $F$ is faithful because $f = \blacknorm \circ \blackcoaction \circ F(f)
  \circ \whiteaction \circ \whitenorm$. 
  To show that $F$ is full, note that an arbitrary morphism
  $h \colon A^* \otimes A \to B^* \otimes B$ in $\V$ is a well-defined
  morphism in $\CPs[\V]$ if and only if it is a well-defined morphism
  in $\Splitd[\CPM[\V]]$:
  \ctikzfig{cpstar_into_split}  
  Both $\CPs[\V]$ and $\Splitd[\CPM[\V]]$ inherit composition,
  identities, and daggers from $\V$, so $F$ is a full and faithful
  functor preserving daggers. Similarly, the
  symmetric monoidal structure in both $\CPs[\V]$ and
  $\Splitd[\CPM[\V]]$ is defined in terms of that of $\V$,
  making $F$ strongly symmetric monoidal. 
\end{proof}

It stands to reason that $F$ might become an equivalence by
restricting the class $\I$. For example, the following lemma shows
that we should at least restrict to splitting unital
projections. A completely positive map $f \colon A^*\otimes A\to
B^*\otimes B$ is \emph{unital} when:
\[ %
\beginpgfgraphicnamed{unital}
\begin{tikzpicture}[dotpic]
	\begin{pgfonlayer}{nodelayer}
		\node [style=none] (0) at (-2, 0.75) {};
		\node [style=none] (1) at (-1, 0.75) {};
		\node [style=none] (2) at (0.25, -0.25) {$=$};
		\node [style=none] (3) at (1, 0.25) {};
		\node [style=none] (4) at (2, 0.25) {};
		\node [style=none] (5) at (1.5, -0.5) {};
		\node [style=none] (6) at (-2, -1) {};
		\node [style=none] (7) at (-1, -1) {};
		\node [style=square box, minimum width=1 cm] (8) at (-1.5, -0.25) {$f$};
		\node [style=none] (9) at (-2, 0.35) {};
		\node [style=none] (10) at (-1, 0.35) {};
		\node [style=none] (11) at (-2, -0.85) {};
		\node [style=none] (12) at (-1, -0.85) {};
	\end{pgfonlayer}
	\begin{pgfonlayer}{edgelayer}
		\draw [style=none, in=180, out=-90] (3.center) to (5.center);
		\draw [style=diredge, in=-90, out=0] (5.center) to (4.center);
		\draw [style=diredge] (0.center) to (9.center);
		\draw [style=diredge] (10.center) to (1.center);
		\draw [style=diredge] (7.center) to (12.center);
		\draw [style=none] (11.center) to (6.center);
		\draw [style=none, bend right=90, looseness=1.50] (6.center) to (7.center);
	\end{pgfonlayer}
\end{tikzpicture}}
\endpgfgraphicnamed
\]

\begin{lemma}
  The functor $F$ from Lemma~\ref{lem:CPsintoSplit} lands in
  $\Split{\I}[\CPM[\V]]$, where $\I$ consists of the unital dagger
  idempotents.
\end{lemma}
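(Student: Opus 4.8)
The plan is to show that the projection $p = F(A,\whitemult,\whiteunit,\whitenorm) = \whiteaction \circ \whitenorm \circ \whitenorm \circ \whitecoaction$ is unital in the sense just defined, i.e.\ that composing $p$ with the appropriate cap/unit structure on both sides yields the identity (equivalently the trivial scalar) — the diagrammatic equation depicted by \tikzfig{unital}. Since we already know from Lemma~\ref{lem:CPsintoSplit} that $p$ is a well-defined dagger idempotent in $\CPM[\V]$, the only thing left is the unitality condition, and once that is checked the image of $F$ on objects lies in $\Split{\I}[\CPM[\V]]$; the morphism part is unchanged, so $F$ corestricts to a functor into $\Split{\I}[\CPM[\V]]$.

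The key computation is diagrammatic. First I would unfold $p$ into its defining form in terms of the action, coaction and the normaliser $\whitenorm$. Then I would apply the unital condition: precompose with the state coming from the Frobenius cup/unit and postcompose with the effect coming from the counit on the doubled object $A^*\otimes A$. The heart of the argument is to push the two copies of $\whitenorm$ through using centrality (so they slide along the wire onto a single spot) and then invoke the alternative normalisability identity, the right-hand equation in (\ref{eqn-lemma-2.9-2.10}), which says exactly that the coaction followed by $\whitenorm\circ\whitenorm$ followed by the action, when capped off appropriately by the (co)unit, collapses to the counit/unit of the Frobenius algebra — so the two normalisers cancel against the ``$\whitenorm^{-2}$'' implicit in the normalisability axiom. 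After that cancellation what remains is a bare action–coaction sandwich capped by unit and counit, which reduces to the identity by the Frobenius identities (the snake/bialgebra-type manipulations recorded as the left-hand equation in (\ref{eqn-lemma-2.9-2.10})).

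The main obstacle is bookkeeping the doubling: everything happens in $\CPM[\V]$, so the object $A$ there is really $A^*\otimes A$ in $\V$, and the unitality equation involves the "canonical" state and effect on this doubled object, which are themselves built from the cap and cup of $\V$ rather than from the Frobenius structure. One must be careful that the unit $\whiteunit$ appearing in $\whitecoaction$ and the $\V$-level cup used in the definition of a unital CP map interact correctly — concretely, that the composite of $\whitecoaction$ with the cup reproduces, after the normaliser cancellation, precisely the Frobenius counit $\whitecounit$, and dually on the other side. This is where the two occurrences of $\whitenorm$ in $p$ (as opposed to one) are essential: they provide the $\whitenorm^{-1}$ on each side that the normalisability axiom needs. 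Once the diagram is set up correctly the cancellation is forced, so I expect the proof to be a short, essentially one-line diagrammatic chase citing (\ref{eqn-lemma-2.9-2.10}).
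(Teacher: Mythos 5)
Your plan matches the paper's proof: the paper likewise takes idempotency and complete positivity of $p=\whiteaction \circ \whitenorm \circ \whitenorm \circ \whitecoaction$ as already established in Lemma~\ref{lem:CPsintoSplit}, and verifies unitality by a short diagram chase that cancels the two normalisers against the normalisability identity in (\ref{eqn-lemma-2.9-2.10}) and collapses the remaining action--coaction composite with the Frobenius identities. One small correction to your setup: unitality is a \emph{one-sided} condition --- precompose $p$ with the canonical cup $I \to A^* \otimes A$ (the identity element of the matrix algebra) and check that you recover that same cup --- so there is no postcomposition with an effect and no reduction to a scalar; literally carrying out your ``on both sides'' version would only establish the weaker statement that $p$ preserves the trace of the identity.
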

\begin{proof}
  Let $(A,\whitemult,\whiteunit,\whitenorm)$ be an object of
  $\CPs[\V]$. Then $F(A)$ is unital because:
  \[ %
\beginpgfgraphicnamed{split_unital}
\begin{tikzpicture}[dotpic]
	\begin{pgfonlayer}{nodelayer}
		\node [style=none] (0) at (0, 1.5) {$=$};
		\node [style=white dot] (1) at (-1.5, -0.5) {};
		\node [style=white dot] (2) at (-1.5, 1.75) {};
		\node [style=white norm] (3) at (-1.5, 0.25) {};
		\node [style=white norm] (4) at (-1.5, 1) {};
		\node [style=none] (5) at (-2, 2.5) {};
		\node [style=none] (6) at (-1, 2.5) {};
		\node [style=none] (7) at (-1.5, -1.25) {};
		\node [style=none] (8) at (1, 2.25) {};
		\node [style=none] (9) at (2, 2.25) {};
		\node [style=white dot] (10) at (1.5, 1.5) {};
		\node [style=white dot] (11) at (1.5, 0.75) {};
		\node [style=none] (12) at (3, 1.5) {$=$};
		\node [style=none] (13) at (7.5, 2.25) {};
		\node [style=none] (14) at (6.5, 1.5) {$=$};
		\node [style=none] (15) at (8.5, 2.25) {};
		\node [style=none] (16) at (8, 1.5) {};
		\node [style=none] (17) at (4, 0.75) {};
		\node [style=white dot] (18) at (5.25, 1.25) {};
		\node [style=white dot] (19) at (5.75, 0.25) {};
		\node [style=none] (20) at (4.75, 0.75) {};
		\node [style=none] (21) at (5.25, 2.25) {};
		\node [style=none] (22) at (4, 2.25) {};
	\end{pgfonlayer}
	\begin{pgfonlayer}{edgelayer}
		\draw [style=diredge] (1) to (3);
		\draw [style=diredge] (4) to (2);
		\draw [style=diredge] (3) to (4);
		\draw [style=diredge, in=-90, out=30] (2) to (6.center);
		\draw [style=diredge, in=150, out=-90] (5.center) to (2);
		\draw [style=diredge, bend right=90, looseness=2.25] (1) to (7.center);
		\draw [style=diredge, bend right=90, looseness=2.25] (7.center) to (1);
		\draw [style=diredge, in=-90, out=30] (10) to (9.center);
		\draw [style=diredge, in=150, out=-90] (8.center) to (10);
		\draw [style=diredge] (11) to (10);
		\draw [style=diredge, in=180, out=-90] (13.center) to (16.center);
		\draw [style=diredge, in=-90, out=0] (16.center) to (15.center);
		\draw [style=diredge, in=-15, out=90] (19) to (18);
		\draw [style=diredge] (18) to (21.center);
		\draw (22.center) to (17.center);
		\draw [style=diredge, in=-165, out=90] (20.center) to (18);
		\draw [in=-90, out=-90, looseness=1.25] (17.center) to (20.center);
	\end{pgfonlayer}
\end{tikzpicture}}
\endpgfgraphicnamed\qedhere
  \]
\end{proof}

From now on, we will fix $\I$ to be the class of unital dagger idempotents.

\subsection*{Hilbert spaces}

When $\V=\FHilb$, the objects of $\CPs[\FHilb]$ are precisely
(concrete) C*-algebras, and the morphisms are 
completely positive maps in the usual sense of C*-algebras; see
{\cite{coeckeheunenkissinger:cpstar}}.
The unital completely positive maps $p \in
\I$ are precisely the physically realisable projections. We will prove
that $F$ is then an equivalence, by employing a classic theorem
by Choi and Effros. It is well-known that the image $f(A)$ of a
*-homomorphism $f \colon A \to B$ is a C*-subalgebra of $B$. The
Choi-Effros theorem shows that the image $p(A)$ of a completely positive unital
projection $p \colon A \to A$ is a C*-algebra in its own
right. In general, it need no longer be a C*-subalgebra; it can have a
different multiplication. The following proposition and its proof make
precise what we need. 
Write $\M_n(A)$ for the C*-algebra of $n$-by-$n$ matrices with
entries in $A$, and simply $\M_n$ for
$\M_n(\mathbb{C})$. The assignment $A \mapsto \M_n(A)$ is functorial 
on the category of C*-algebras and *-homomorphisms. The category $\CPM[\FHilb]$ can
be identified with the full subcategory of $\CPs[\FHilb]$ consisting
of the matrix algebras $\M_n$.

\begin{proposition}\label{prop:G}
  There is a functor $G \colon \Split{\I}[\CPM[\FHilb]] \to \CPs[\FHilb]$
  that sends an object $(\M_m,p)$ to its range $p(\M_m)$, and 
  a morphism $f \colon (\M_m,p) \to (\M_n,q)$ to its underlying
  function $f \colon \M_m \to \M_n$.
\end{proposition}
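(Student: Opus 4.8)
The plan is to verify that the stated assignment $G$ is a well-defined functor by checking, in order: (i) that the object map lands in $\CPs[\FHilb]$; (ii) that the morphism map is well-defined; (iii) functoriality. For (i), the key input is the Choi--Effros theorem: if $p\colon \M_m\to\M_m$ is a unital completely positive idempotent (which is exactly what membership in $\I$ gives us), then its range $p(\M_m)$, equipped with the multiplication $a*b := p(ab)$ and the inherited $*$-operation, is a C*-algebra. Since every finite-dimensional C*-algebra is an object of $\CPs[\FHilb]$ (identifying $\CPs[\FHilb]$ with finite-dimensional C*-algebras and completely positive maps, as recalled just above), this makes $G(\M_m,p) = p(\M_m)$ an object of $\CPs[\FHilb]$. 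I should be careful here that the C*-algebra structure supplied by Choi--Effros is genuinely the one intended, and in particular that the range is taken as a subspace of $\M_m$ with its own (possibly different) multiplication, not as a C*-subalgebra; the paper has already flagged this subtlety.

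For (ii), let $f\colon(\M_m,p)\to(\M_n,q)$ be a morphism in $\Split{\I}[\CPM[\FHilb]]$, i.e.\ a completely positive map $f\colon\M_m\to\M_n$ with $f = q\circ f\circ p$. I must show the underlying linear map restricts to a completely positive map $p(\M_m)\to q(\M_n)$ in the C*-algebra sense, i.e.\ that it is a morphism of $\CPs[\FHilb]$. The condition $f = q\circ f\circ p$ immediately gives $f(p(\M_m))\subseteq q(\M_n)$, so $f$ does restrict to a linear map between the ranges. Complete positivity of this restriction should follow from complete positivity of $f$ together with the fact that the C*-algebra structure on the ranges is exactly the Choi--Effros one: concretely, $\M_k(p(\M_m))$ sits inside $\M_k(\M_m)$ as the range of the amplified projection $\M_k(p)$, positivity in the Choi--Effros C*-algebra coincides with positivity inherited from the ambient matrix algebra (this is part of the Choi--Effros statement), and $f$ being completely positive as a map of matrix algebras then transports positive elements to positive elements. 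So $G(f)$ is a legitimate $\CPs[\FHilb]$-morphism.

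For (iii), functoriality is nearly immediate since $G$ acts as the identity on underlying linear maps and both categories inherit composition from $\V=\FHilb$: $G(\mathrm{id}_{(\M_m,p)})$ is the identity on $p(\M_m)$ (note the identity morphism of $(\M_m,p)$ in $\Split{\I}$ is $p$ itself, which acts as the identity on its range), and $G(g\circ f) = G(g)\circ G(f)$ because the composite linear map $g\circ f$ restricted to the ranges is the composite of the restrictions. The main obstacle is really step (i)/(ii): one must invoke the Choi--Effros theorem correctly and, crucially, use its full strength---not merely that $p(\M_m)$ is \emph{a} C*-algebra, but that its matrix amplifications $\M_k(p(\M_m))$ have their positive cones compatible with those of $\M_k(\M_m)$, since that compatibility is what makes "completely positive between the ranges" agree with "completely positive between the ambient matrix algebras." Everything else is bookkeeping.
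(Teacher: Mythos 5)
Your proposal is correct and follows essentially the same route as the paper's proof: invoke the Choi--Effros theorem (applicable since unital implies contractive) to make $p(\M_m)$ a C*-algebra under $a*b=p(ab)$, use the Choi--Effros compatibility of positive cones $\M_k(p(\M_m))^+=\M_k(\M_m)^+\cap\M_k(p(\M_m))$ to see that $G(f)$ is completely positive, and note functoriality is immediate. You correctly identified the positive-cone compatibility as the crucial non-bookkeeping ingredient, which is exactly the point the paper emphasises.
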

\begin{proof}
  Because $p$ is unital, it is certainly contractive, because it has
  operator norm $\|p\| =\|p(1)\| = \|1\| = 1$. Therefore, a classic
  theorem by Choi and Effros applies, showing that $p(\M_m)$ is
  a well-defined C*-algebra under the product $(a,b) \mapsto
  p(ab)$~\cite[Theorem~3.1]{choieffros:injectivity} (see
  also~\cite[Section~2.2]{stormer:positive}). 
  Hence $G$ is well-defined on objects. Because dagger idempotents
  dagger split in $\FHilb$, this can be denoted graphically as
  \[
  G(\M_m \stackrel{p}{\to} \M_m) = (p(\M_m), %
\beginpgfgraphicnamed{choieffros}
\begin{tikzpicture}[dotpic]
	\begin{pgfonlayer}{nodelayer}
		\node [style=none] (0) at (-1, -0.75) {};
		\node [style=none] (1) at (0.5, -0.75) {};
		\node [style=none] (2) at (-0.25, 0.25) {};
		\node [style=none] (3) at (-0.5, -0.75) {};
		\node [style=none] (4) at (1, -0.75) {};
		\node [style=none] (5) at (0.25, 0.25) {};
		\node [style=none] (6) at (0, 0.65) {};
		\node [style=none] (7) at (0, 1.15) {};
		\node [style=none] (8) at (-0.75, -1.15) {};
		\node [style=none] (9) at (-0.75, -1.75) {};
		\node [style=none] (10) at (0.75, -1.15) {};
		\node [style=none] (11) at (0.75, -1.75) {};
		\node [style=wide white point] (12) at (-0.75, -0.9) {};
		\node [style=wide white point] (13) at (0.75, -0.9) {};
		\node [style=wide white copoint] (14) at (0, 0.4) {};
	\end{pgfonlayer}
	\begin{pgfonlayer}{edgelayer}
		\draw [style=diredge] (6.center) to (7.center);
		\draw [style=diredge, in=90, out=-90, looseness=0.75] (2.center) to (0.center);
		\draw [style=diredge, in=-90, out=90] (4.center) to (5.center);
		\draw [style=diredge, bend left=90] (3.center) to (1.center);
		\draw [style=diredge] (9.center) to (8.center);
		\draw [style=diredge] (11.center) to (10.center);
	\end{pgfonlayer}
\end{tikzpicture}}
\endpgfgraphicnamed, \whitenorm),
  \]
  where $\splitcotriangle \colon (\C^m)^* \otimes \C^m \to p(\M_m)$ is (a dagger
  splitting of) $p$ with inclusion $\splittriangle \colon p(\M_m) \to
  (\C^m)^* \otimes \C^m$, and $\whitenorm$ is the unique normaliser. That
  is, we have $\id[p(\M_m)] = p = \splitcotriangle \circ
  \splittriangle \colon p(\M_m) \to p(\M_m)$.

  Choi and Effros~\cite[Theorem~3.1]{choieffros:injectivity} also
  study how positivity in $p(\M_m)$ and $\M_m$ is related.
  Specifically, they prove that $\M_k(p(\M_m))^+ = \M_k(\M_m)^+ \cap
  \M_k(p(\M_m))$, where $A^+$ denotes the positive cone of a C*-algebra $A$.
  To see that $G$ is well-defined on morphisms, let $f \colon
  (\M_m,p) \to (\M_n,q)$ be a morphism in
  $\Split{\I}[\CPM[\FHilb]]$. 
  Then $G(f)$ is a well-defined completely positive map $p(\M_m) \to
  q(\M_n)$ precisely when $x \in \M_k(p(\M_m))^+$ implies $\M_k f(x)
  \in \M_k(q(\M_n))^+$ for all $k$. But this is indeed true because $f
  \colon \M_m \to \M_n$ is a completely positive map satisfying $f=q
  \circ f \circ p$.  Finally, $G$ is clearly functorial.
\end{proof}

\begin{theorem}
  The functors $F$ and $G$ implement an equivalence between the categories
  $\CPs[\FHilb]$ and $\Split{\I}[\CPM[\FHilb]]$.
\end{theorem}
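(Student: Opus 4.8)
The plan is to show that $F$ and $G$ are quasi-inverse equivalences. We already know that $F$ is full and faithful (Lemma~\ref{lem:CPsintoSplit}) and that it corestricts to $\Split{\I}[\CPM[\FHilb]]$, and Proposition~\ref{prop:G} supplies $G$ in the opposite direction; so it remains only to produce natural isomorphisms $G\circ F\cong\id$ and $F\circ G\cong\id$. I would do this in three moves: first verify that $G$ too is full and faithful, then build a natural isomorphism $G\circ F\cong\id_{\CPs[\FHilb]}$, and finally deduce $F\circ G\cong\id$ by a purely formal argument. The hard part will be the middle move, where the dagger/Frobenius structure of a C*-algebra has to be matched up explicitly with the Choi--Effros product.

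\emph{$G$ is full and faithful.} Here I would use the identification of $\CPs[\FHilb]$ with C*-algebras and completely positive maps. The relative-positivity clause of the Choi--Effros theorem quoted in the proof of Proposition~\ref{prop:G}, that $\M_k(p(\M_m))^+=\M_k(\M_m)^+\cap\M_k(p(\M_m))$ for every $k$, makes both the corestriction $p\colon\M_m\to p(\M_m)$ and the inclusion $p(\M_m)\hookrightarrow\M_m$ completely positive. Hence for any morphism $g\colon p(\M_m)\to q(\M_n)$ of $\CPs[\FHilb]$, the composite $f$ of $p\colon\M_m\to p(\M_m)$, then $g$, then $q(\M_n)\hookrightarrow\M_n$ is a completely positive map $\M_m\to\M_n$ satisfying $f=q\circ f\circ p$; so $f$ is a morphism $(\M_m,p)\to(\M_n,q)$ of $\Split{\I}[\CPM[\FHilb]]$ with $G(f)=g$, which gives fullness. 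Faithfulness is immediate, since the constraint $f=q\circ f\circ p$ recovers $f$ from its restriction $G(f)$ to $p(\M_m)$. (This is the same computation as in the proof of Proposition~\ref{prop:G}, read in the opposite direction.)

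\emph{$G\circ F\cong\id_{\CPs[\FHilb]}$.} By Remark~\ref{rem:CPsn} we may take the object $A$ to be a normal dagger Frobenius algebra, so $F(A)=(A,\whiteaction\circ\whitecoaction)$ and $GF(A)$ is the image of the completely positive idempotent $\whiteaction\circ\whitecoaction\colon A^*\otimes A\to A^*\otimes A$, equipped with the Choi--Effros product. Morally, $\whiteaction$ is (a normalisation of) the left regular representation of the C*-algebra $A$ on its own underlying Hilbert space, and $\whitecoaction$ the corresponding conditional expectation onto its image; concretely, the identities~\eqref{eqn-lemma-2.9-2.10} give $\whitecoaction\circ\whiteaction=\id[A]$, so $\whiteaction$ corestricts to a bijection from $A$ onto $GF(A)$ with inverse the restriction of $\whitecoaction$, and a further short computation with~\eqref{eqn-lemma-2.9-2.10} identifies the Choi--Effros product on $GF(A)$ with the multiplication of $A$. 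Thus this bijection is an isomorphism in $\CPs[\FHilb]$, completely positive in both directions, and naturality in $A$ follows at once from the formula $F(f)=\blackaction\circ\blacknorm\circ f\circ\whitenorm\circ\whitecoaction$.

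\emph{Conclusion.} Now $F$ and $G$ are both full and faithful, and $G\circ F\cong\id$. Evaluating the natural isomorphism $\id\cong G\circ F$ at $G(X)$ yields a natural isomorphism $G(X)\cong G(FG(X))$; since $G$ is full and faithful, hence reflects isomorphisms, this is the image under $G$ of a unique natural isomorphism $FG(X)\cong X$, so $F\circ G\cong\id_{\Split{\I}[\CPM[\FHilb]]}$. Therefore $F$ and $G$ implement an equivalence between $\CPs[\FHilb]$ and $\Split{\I}[\CPM[\FHilb]]$, as claimed. I expect all the genuine content to sit in the second move: being precise about how the Frobenius data of $A$ reproduces the Choi--Effros product on the image of $\whiteaction\circ\whitecoaction$, and hence that the evident bijection $A\cong GF(A)$ really is an isomorphism in $\CPs[\FHilb]$; the first move reuses an argument already in the paper, and the last is a diagram chase.
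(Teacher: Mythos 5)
Your argument is correct, but it runs the equivalence in the opposite direction from the paper. The paper's proof works entirely on the $\Split{\I}[\CPM[\FHilb]]$ side: for a unital completely positive projection $p$ on $\M_m$ it constructs, graphically, explicit completely positive maps $f$ and $g$ out of the dagger splitting of $p$ and the Frobenius structure on $p(\M_m)$, checks $g\circ f=p$ and $f\circ g=F(G(p))$, and concludes that $F$ is essentially surjective; since $F$ is already full and faithful by Lemma~\ref{lem:CPsintoSplit}, it is an equivalence, and $G(F(A))\cong A$ is then obtained by the same formal argument you use at the end (applied to $F$ rather than $G$). You instead establish that $G$ is full and faithful via the relative-positivity clause $\M_k(p(\M_m))^+=\M_k(\M_m)^+\cap\M_k(p(\M_m))$, prove $G\circ F\cong\id$ directly on the C*-algebra side, and deduce $F\circ G\cong\id$ formally; this is a legitimate alternative decomposition, and it has the side benefit of making the full faithfulness of $G$ explicit, which the paper only gets implicitly. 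The one place you should not leave as a ``short computation'' is the identification of the Choi--Effros product on $GF(A)=\whiteaction(A)$ with the product of $A$: the clean way to say it is that for a normal dagger Frobenius algebra $\whiteaction$ is a unital $*$-homomorphism of $A$ into the pair-of-pants algebra $A^*\otimes A$ (this is where the identities~(\ref{eqn-lemma-2.9-2.10}) are used), so its image is an honest C*-subalgebra, closed under the ambient product; hence for $x,y$ in the image $P(xy)=xy$ and the Choi--Effros product degenerates to the restricted one, and $\whiteaction$ is a $*$-isomorphism onto $GF(A)$, in particular completely positive in both directions. With that spelled out, your proof is complete and roughly comparable in effort to the paper's, trading the paper's graphical computation with the splitting for a concrete C*-algebraic one.
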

\begin{proof}
  Let $p \colon \M_m \to \M_m$ be a completely positive unital projection.
  We will show that $F(G(p)) \cong p$. This will establish that $F$ is
  essentially surjective on objects. Since it is also full and
  faithful, it follows that $F$ is an equivalence. Using
  the graphical notation from the proof of Proposition~\ref{prop:G}, define 
  \begin{align*}
    g & = %
\beginpgfgraphicnamed{split_hilb_g}
\begin{tikzpicture}[dotpic]
	\begin{pgfonlayer}{nodelayer}
		\node [style=none] (0) at (-1, -0.75) {};
		\node [style=none] (1) at (0.5, -0.75) {};
		\node [style=none] (2) at (-0.25, 0.25) {};
		\node [style=none] (3) at (-0.5, -0.75) {};
		\node [style=none] (4) at (1, -0.75) {};
		\node [style=none] (5) at (0.25, 0.25) {};
		\node [style=none] (6) at (0, 0.65) {};
		\node [style=none] (7) at (0, 1) {};
		\node [style=none] (8) at (-0.75, -1.15) {};
		\node [style=none] (9) at (-0.75, -1.5) {};
		\node [style=none] (10) at (0.75, -1.15) {};
		\node [style=none] (11) at (0.75, -1.5) {};
		\node [style=wide white point] (12) at (-0.75, -0.9) {};
		\node [style=wide white point] (13) at (0.75, -0.9) {};
		\node [style=wide white copoint] (14) at (0, 0.4) {};
		\node [style=wide white point] (15) at (0, 1.25) {};
		\node [style=none] (16) at (0.25, 1.4) {};
		\node [style=none] (17) at (-0.25, 1.4) {};
		\node [style=none] (18) at (0.25, 1.75) {};
		\node [style=none] (19) at (-0.25, 1.75) {};
	\end{pgfonlayer}
	\begin{pgfonlayer}{edgelayer}
		\draw [style=diredge] (6.center) to (7.center);
		\draw [style=diredge, in=90, out=-90, looseness=0.75] (2.center) to (0.center);
		\draw [style=diredge, in=-90, out=90] (4.center) to (5.center);
		\draw [style=diredge, bend left=90] (3.center) to (1.center);
		\draw [style=diredge] (8.center) to (9.center);
		\draw [style=diredge] (11.center) to (10.center);
		\draw [style=diredge] (19.center) to (17.center);
		\draw [style=diredge] (16.center) to (18.center);
	\end{pgfonlayer}
\end{tikzpicture}}
\endpgfgraphicnamed \colon p(\M_m)^* \otimes p(\M_m) \to
    \M_m, \\[1mm]
    f & = %
\beginpgfgraphicnamed{split_hilb_f}
\begin{tikzpicture}[dotpic]
	\begin{pgfonlayer}{nodelayer}
		\node [style=none] (0) at (-1, -0.5) {};
		\node [style=none] (1) at (0.5, -0.5) {};
		\node [style=none] (2) at (-0.25, -1.5) {};
		\node [style=none] (3) at (-0.5, -0.5) {};
		\node [style=none] (4) at (1, -0.5) {};
		\node [style=none] (5) at (0.25, -1.5) {};
		\node [style=none] (6) at (-0.75, -0.1) {};
		\node [style=none] (7) at (0.75, -0.1) {};
		\node [style=wide white copoint] (8) at (-0.75, -0.35) {};
		\node [style=wide white copoint] (9) at (0.75, -0.35) {};
		\node [style=none] (10) at (-0.75, 1) {};
		\node [style=none] (11) at (0.75, 1) {};
		\node [style=white norm] (12) at (-0.75, 0.5) {};
		\node [style=white norm] (13) at (0.75, 0.5) {};
	\end{pgfonlayer}
	\begin{pgfonlayer}{edgelayer}
		\draw [style=diredge, in=90, out=-90, looseness=0.75] (0.center) to (2.center);
		\draw [style=diredge, in=-90, out=90] (5.center) to (4.center);
		\draw [style=diredge, bend left=90] (1.center) to (3.center);
		\draw [style=diredge] (10.center) to (12);
		\draw [style=diredge] (12) to (6.center);
		\draw [style=diredge] (13) to (11.center);
		\draw [style=diredge] (7.center) to (13);
	\end{pgfonlayer}
\end{tikzpicture}}
\endpgfgraphicnamed \colon \M_m \to p(\M_m)^* \otimes p(\M_m).
  \end{align*}
  Then $f$ is in Kraus form, and hence completely positive, by
  construction. Similarly, $g$ is the composition of
  $p = \splittriangle\circ\splitcotriangle$, which is completely
  positive by assumption, and another map that is completely positive
  by construction. Hence
  $f$ and $g$ are well-defined morphisms in $\CPM[\FHilb]$. Moreover,
  by (\ref{eqn-lemma-2.9-2.10}),
  $g \circ f = \splittriangle \circ \splitcotriangle = p \colon \M_m
  \to \M_m$. Also,
  \ctikzfig{split_hilb_fg}
  Therefore $f \circ g = F(G(p)) \colon p(\M_m)^* \otimes p(\M_m) \to
  p(\M_m)^* \otimes p(\M_m)$. It follows that $f=F(G(p)) \circ f \circ
  p$ and $g = p \circ g \circ F(G(p))$, making $f$ and $g$ into
  well-defined morphisms of $\Split{\I}[\CPM[\FHilb]]$. In fact, this
  shows that $f$ and $g$ implement an isomorphism in that category,
  establishing $F(G(p)) \cong p$.

  It now follows that if $A \in \CPs[\FHilb]$, then $F(G(F(A))) \cong
  F(A)$, and because $F$ is full and faithful, hence $G(F(A)) \cong
  A$. It is easy to see that this isomorphism, as well as $F(G(p))
  \cong p$, is natural. Thus $F$ and $G$ form an equivalence.
\end{proof}

\begin{remark}
  To motivate the need to restrict to the class of unital projections
  $\I$, let us show that not every 
  object in $\Splitd[\CPM[\FHilb]]$ is unital.  
  We give a counterexample of a completely positive projection that is
  not even contractive.\footnote{We thank Erling St{\o}rmer for discussions on
  this subject.} Take $A=\M_n$, and let $a \in A$ satisfy $a
  \geq 0$, $\|a\|>1$, and $\Tr(a)=\Tr(a^2)$. For example, we could
  pick $n=2$ and 
  \[
    a=\begin{pmatrix}
      \frac{1}{2}+\frac{1}{2}\sqrt{2} & 0 \\ 0 &
      \frac{1}{2} \end{pmatrix}.
  \]
  We will define $p$ as the orthogonal projection onto the
  one-dimensional subspace spanned by a suitable density matrix
  $\rho$. Precisely, define $\rho \in A$, $f \colon A \to \C$, and $p
  \colon A \to A$ by 
  \[
    \rho=\frac{a}{\Tr(a)}, \qquad 
    f(x)=\Tr(\rho x), \qquad
    p(x)= f(x)a.
  \]
  Then $\rho\geq 0$ and $\Tr(\rho)=1$, so $\rho$ is a density matrix.
  The adjoint of $p$ with respect to the trace inner product
  $\inprod{x}{y}=\Tr(x^\dag y)$ is $p^\dag(x) = \rho \Tr(ax)$:
  \[
    \Tr(p(x) y) = \Tr(\rho x) \Tr(ay) = \Tr(x p^\dag(y)).
  \]
  Hence $p$ is self-adjoint:
  \[
    p^\dag(x) = \rho \Tr(ax) = \frac{\Tr(ax)a}{\Tr(a)} = \Tr(\rho x)a = p(x).
  \]
  It is also idempotent, because $f(a)=\Tr(\rho a)=\frac{\Tr(a^2)}{\Tr(a)}=1$:
  \[
    p^2(x) = p(\Tr(\rho x) a) = \Tr(\rho x) p(a) = \Tr(\rho x)
    \Tr(\rho a) a = \Tr(\rho x) a = p(x).
  \]
  Thus $p$ is a well-defined object of $\Splitd[\CPM[\FHilb]]$.
  But by the Russo-Dye theorem~\cite[Theorem~1.3.3]{stormer:positive},
  the operator norm of $p$ is
  \[
    \|p\| = \|p(1)\| = \|\Tr(\rho) a\| = \|a\| > 1.
  \]
  Hence $p$ is not contractive, and in particular, not unital. We
  leave open the question whether every object of
  $\Splitd[\CPM[\FHilb]]$ is isomorphic to a unital one. 
\end{remark}

\subsection*{Sets and relations}

Now consider $\V=\Rel$, the category of sets and
relations. We will show that in this case, the canonical functor $F$ is
not an equivalence, even when restricting to the class of unital
projections $\I$, and in fact that there
can be no dagger equivalence at all. 

Recall from~\cite{coeckeheunenkissinger:cpstar} that the category
$\CPs[\Rel]$ has small groupoids $\cat{G}$ as objects; morphisms
$\cat{G} \to \cat{H}$ are relations $R \colon \Mor(\cat{G}) \to
\Mor(\cat{H})$ satisfying   
\begin{align}\label{eq:cprel}
  \text{ if } gRh, 
  \text{ then } g^{-1} R h^{-1} 
  \text{ and } \id[\dom(g)] R \id[\dom(h)].
\end{align}
Notice that $\CPs[\Rel]=\CPsn[\Rel]$ because the only positive
isomorphisms in $\Rel$ are identities.  

We say that two dagger categories $\cat{C}$ and $\cat{D}$ are
\emph{dagger equivalent} when there exist dagger functors $F \colon
\cat{C} \to \cat{D}$ and $G \colon \cat{D} \to \cat{C}$ and natural unitary
isomorphisms $G\circ F \cong \id[\cat{C}]$ and $F \circ G \cong
\id[\cat{D}]$. 

\begin{lemma}
 If all dagger idempotents dagger split in a dagger category
 $\cat{C}$, then they do so in any dagger equivalent category $\cat{D}$.
\end{lemma}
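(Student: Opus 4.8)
The plan is to transport the splitting along the equivalence. Let $e \colon D \to D$ be a dagger idempotent in $\cat{D}$; I want to produce a dagger splitting of $e$ in $\cat{D}$ using the fact that dagger idempotents split in $\cat{C}$ together with the dagger functors $F \colon \cat{C} \to \cat{D}$, $G \colon \cat{D} \to \cat{C}$ and the natural unitaries $\eta \colon \id[\cat{C}] \Rightarrow G \circ F$ and $\varepsilon \colon F \circ G \Rightarrow \id[\cat{D}]$ witnessing the dagger equivalence.

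First I would observe that $G$ is a dagger functor, so $G(e) \colon GD \to GD$ satisfies $G(e)^\dag = G(e^\dag) = G(e)$ and $G(e) \circ G(e) = G(e \circ e) = G(e)$; hence $G(e)$ is a dagger idempotent in $\cat{C}$. By hypothesis it dagger splits: there are an object $X$ of $\cat{C}$ and a morphism $r \colon GD \to X$ with $r \circ r^\dag = \id[X]$ and $r^\dag \circ r = G(e)$. Next I apply $F$ to get $F(r) \colon FGD \to FX$ with $F(r) \circ F(r)^\dag = \id[FX]$ and $F(r)^\dag \circ F(r) = F(G(e))$, so $FX$ together with $F(r)$ dagger splits $F(G(e))$ in $\cat{D}$. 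The remaining task is to move this from $F(G(e))$ to $e$ itself, using the counit $\varepsilon$. Since $\varepsilon$ is a natural transformation $F \circ G \Rightarrow \id[\cat{D}]$, naturality at $e$ gives $e \circ \varepsilon_D = \varepsilon_D \circ F(G(e))$; as $\varepsilon_D$ is unitary this yields $F(G(e)) = \varepsilon_D^\dag \circ e \circ \varepsilon_D$. Therefore, setting $s = F(r) \circ \varepsilon_D^\dag \colon D \to FX$, I compute $s \circ s^\dag = F(r) \circ \varepsilon_D^\dag \circ \varepsilon_D \circ F(r)^\dag = F(r) \circ F(r)^\dag = \id[FX]$ and $s^\dag \circ s = \varepsilon_D \circ F(r)^\dag \circ F(r) \circ \varepsilon_D^\dag = \varepsilon_D \circ F(G(e)) \circ \varepsilon_D^\dag = \varepsilon_D \circ \varepsilon_D^\dag \circ e \circ \varepsilon_D \circ \varepsilon_D^\dag = e$. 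Thus $(FX, s)$ is a dagger splitting of $e$ in $\cat{D}$.

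The argument is essentially bookkeeping with daggers and unitaries, and the only point that needs care is making sure the definition of dagger equivalence supplies what I need: that $F$ and $G$ are genuine \emph{dagger} functors (so they preserve $\dagger$), and that the natural isomorphisms $\eta, \varepsilon$ are \emph{unitary}. Both are part of the definition of dagger equivalence recalled just above the statement, so no extra work is required; in particular I never use $\eta$, only $\varepsilon$ and the dagger-functoriality of $G$ and $F$. One could equally run the symmetric argument using $G$, $\eta$, and a splitting in $\cat{D}$ transported to $\cat{C}$, but the direction above is the one we want. This completes the proof.
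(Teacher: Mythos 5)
Your proof is correct and follows essentially the same route as the paper's: push the idempotent into $\cat{C}$ via $G$, split it there, apply $F$, and conjugate by the unitary component of the counit (your $\varepsilon_D^\dag$ is the paper's $u$). The only difference is that you spell out the naturality argument giving $F(G(e)) = \varepsilon_D^\dag \circ e \circ \varepsilon_D$, which the paper leaves implicit.
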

\begin{proof}
  Let $p \colon X \to X$ be a dagger idempotent in $\cat{D}$.
  Then $G(p)$ is a dagger idempotent in $\cat{C}$, and hence dagger
  splits; say $f \colon G(X) \to Y$ satisfies $G(p) = f^\dag \circ
  f$ and $f \circ f^\dag = \id[Y]$. Let $u$ be the unitary isomorphism
  $X \to F(G(X))$, and set $g = F(f) \circ u \colon X \to F(Y)$. Then
  $p = g^\dag \circ g$ and $g \circ g^\dag = F(\id[Y]) = \id[F(Y)]$.
\end{proof}

\begin{theorem}\label{thm:FRelnotequivalence}
  The categories $\CPs[\Rel]$ and $\Splitd[\CPM[\Rel]]$ cannot be
  dagger equivalent.
\end{theorem}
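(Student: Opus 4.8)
The plan is to apply the preceding lemma. First I would note that every dagger idempotent in $\Splitd[\CPM[\Rel]]$ dagger splits, as happens in any dagger idempotent splitting completion: a dagger idempotent on an object $(X,p)$ of $\Splitd[\cat{C}]$ is a morphism $e\colon X\to X$ of $\cat{C}$ with $e=e^\dag=e\circ e$ and $e=p\circ e\circ p$, hence a dagger idempotent of $\cat{C}$, and the object $(X,e)$ of $\Splitd[\cat{C}]$ dagger splits it. Thus, were $\CPs[\Rel]$ and $\Splitd[\CPM[\Rel]]$ dagger equivalent, the preceding lemma would force every dagger idempotent of $\CPs[\Rel]$ to dagger split. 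So it suffices to exhibit a dagger idempotent of $\CPs[\Rel]$ that does \emph{not} dagger split.

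For this I would take $\cat{G}$ to be the disjoint union of two copies of the indiscrete groupoid on two objects: objects $1,2,3,4$, with $\{1,2\}$ and $\{3,4\}$ as its two components, morphisms $\id[1],\id[2],\id[3],\id[4]$, an isomorphism $\sigma\colon1\to2$ with inverse $\sigma^{-1}\colon2\to1$, an isomorphism $\tau\colon3\to4$ with inverse $\tau^{-1}\colon4\to3$, and no other morphisms. Let $P$ be the relation on $\Mor(\cat{G})$ consisting of the diagonal together with the pair $(\id[2],\id[3])$ and its converse. Then $P$ is symmetric and transitive, indeed an equivalence relation on $\Mor(\cat{G})$, hence a dagger idempotent for the composition and dagger inherited from $\Rel$; and it satisfies~\eqref{eq:cprel}, since the only non-diagonal related pair is $(\id[2],\id[3])$, and because $\id[2]$ and $\id[3]$ are each self-inverse and are the identity morphisms on their own domains, both clauses of~\eqref{eq:cprel} for this pair reduce to the requirement that $\id[2]$ and $\id[3]$ be $P$-related, which they are. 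So $P$ is a dagger idempotent of $\CPs[\Rel]$.

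The heart of the argument is that $P$ does not dagger split. Suppose $R\colon\cat{G}\to\cat{H}$ dagger splits it in $\CPs[\Rel]$, so $R^\dag\circ R=P$ and $R\circ R^\dag=\id[\cat{H}]$ as relations. These equations force $R$ to be a surjective function $\Mor(\cat{G})\to\Mor(\cat{H})$ with kernel $P$, so, identifying $\Mor(\cat{H})$ with the seven-element set $\Mor(\cat{G})/P$, the map $R$ is the quotient map. Applying~\eqref{eq:cprel} to $R$ then determines the groupoid $\cat{H}$ on this set: inverses are computed classwise, e.g.\ $[\sigma]^{-1}=[\sigma^{-1}]$, and the identity at the source of a morphism $[g]$ is the class $[\id[\dom(g)]]$. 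Hence the identity morphisms of $\cat{H}$ are $[\id[1]]$, $[\id[2]]=[\id[3]]$ and $[\id[4]]$ — three distinct objects — and one reads off that $[\sigma]$ runs from the $[\id[1]]$-object to the $[\id[2]]$-object while $[\tau]$ runs from the $[\id[3]]=[\id[2]]$-object to the $[\id[4]]$-object. In a category these compose, so $[\tau]\circ[\sigma]$ is a morphism of $\cat{H}$ from the $[\id[1]]$-object to the $[\id[4]]$-object. But a class $[z]\in\Mor(\cat{G})/P$ has source-identity $[\id[1]]$ only if $\dom(z)=1$, that class being a singleton, and target-identity $[\id[4]]$ only if $\cod(z)=4$; since $1$ and $4$ lie in different components of $\cat{G}$, no such $z$ exists, so $[\tau]\circ[\sigma]$ can be none of the seven classes — a contradiction. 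Therefore $P$ does not dagger split in $\CPs[\Rel]$, and the theorem follows.

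I expect the main obstacle to be the choice of dagger idempotent: most naive candidates do split, because the quotient set then carries a groupoid structure compatible with the inverse and source maps forced by~\eqref{eq:cprel}; the disjoint-union example is engineered precisely so that these forced maps make $[\sigma]$ and $[\tau]$ composable in $\cat{H}$ while no admissible composite exists. A secondary point deserving care is the claim that a dagger splitting object's morphism set, inverses, and source/target maps are entirely determined by $P$ and~\eqref{eq:cprel}, so that the failure of composability is a genuine obstruction rather than an artefact of a poor choice of $\cat{H}$.
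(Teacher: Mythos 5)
Your proposal is correct and follows the same overall strategy as the paper: reduce, via the preceding lemma (plus the standard fact that dagger idempotents split in $\Splitd[\CPM[\Rel]]$, which the paper leaves implicit but you rightly spell out), to exhibiting a dagger idempotent of $\CPs[\Rel]$ that does not dagger split, and then show that the forced seven-element morphism set of any would-be splitting groupoid cannot support a required composite. The only real difference is the witness: the paper takes the indiscrete groupoid on three objects and the \emph{sub}-diagonal idempotent that deletes the two morphisms $h,h^{-1}$ between the outer objects, so the missing composite $S(g)\circ S(f)$ would be an eighth morphism; you take a disjoint union of two two-object indiscrete groupoids and a \emph{super}-diagonal equivalence relation gluing $\id[2]$ to $\id[3]$, so the composite $[\tau]\circ[\sigma]$ has no candidate class because its source and target lie in different components of $\cat{G}$. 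Both arguments are sound and of essentially the same difficulty; your careful justification that $R$ must be the quotient map and that \eqref{eq:cprel} determines inverses and source/target identities classwise is exactly the point the paper also relies on (via injectivity of $S$).
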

\begin{proof}
  By the previous lemma it suffices to exhibit a dagger idempotent in
  $\CPs[\Rel]$ that does not dagger split. Let $\cat{G}$ be the
  connected groupoid with 3 objects and 9 morphisms:
  \[\cat{G} = \begin{aligned}\xymatrix@C+4ex{
    a \ar@(u,l)_-{\id[a]} \ar@<.5ex>^(.66){f}[dr] \ar@<.5ex>^-{h}[rr]
    && c \ar@(u,r)^-{\id[c]} \ar@<.5ex>^-{h^{-1}}[ll] \ar@<.5ex>^-{g^{-1}}[dl]
    \\ & b \ar@(dl,dr)_-{\id[b]} \ar@<.5ex>^-{f^{-1}}[ul] \ar@<.5ex>^(.33){g}[ur]
  }\end{aligned}\]
  Write $G$ for the set of morphisms of $\cat{G}$, and define $R = \{
  (x,x) \mid x \in G \setminus \{h,h^{-1}\} \} \subseteq G \times G$. Then $R$
  satisfies~\eqref{eq:cprel}, and hence is a well-defined morphism in
  $\CPs[\Rel]$. Moreover, it is a dagger idempotent. Suppose that $R$
  dagger splits via some $S \subseteq G \times H$; concretely, this means
  $H$ is the morphism set of some groupoid $\cat{H}$, and $S$ satisfies
  equation~\eqref{eq:cprel}, $R=S^\dag \circ S$, and $S \circ S^\dag =
  \id[H]$.
  It follows from $R=S^\dag \circ S$ that $x$ is related by $S$ to
  some element of $H$ if and only if $x$ is neither $h$ nor
  $h^{-1}$. It also follows from $S^\dag \circ S = R \subseteq \id[G]$
  that $xSy$ and $x'Sy$ imply $x=x'$. Hence $S^\dag$ is
  single-valued. Furthermore, it follows from $S \circ S^\dag =
  \id[H]$ that any $y\in H$ relates to some $x \in G$ by $S^\dag$, and
  that $xSy$ and $xSy'$ imply $y=y'$. Thus $S$ is (the graph of)
  a bijection $\{\id[a],\id[b],\id[c],f,f^{-1},g,g^{-1}\} \to H$, and
  $S^\dag$ is (the graph of) its inverse. Hence $\cat{H}$ must have 7
  morphisms. 

  If $S(f)$ were an endomorphism, then $S(\id[a]) = \id[\dom(f)] =
  S(\id[b])$ by~\eqref{eq:cprel}, contradicting injectivity of
  $S$. Similarly, $S(g)$ cannot be an endomorphism. So we may assume
  that $\dom(S(\id[a])) \stackrel{S(f)}{\to} \dom(S(\id[b]))
  \stackrel{S(g)}{\to} \dom(S(\id[c]))$ with $S(\id[a]) \neq
  S(\id[b])$. But because $\cat{H}$ is a groupoid, there must exist a
  morphism $\dom(S(\id[a])) \to \dom(S(\id[b]))$, which contradicts
  the fact that $\cat{H}$ can only have 7 morphisms.
\end{proof}

\begin{corollary}
  The functor $F \colon \CPs[\Rel] \to \Splitd[\CPM[\Rel]]$ is not an equivalence.
\end{corollary}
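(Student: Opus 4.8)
The plan is to argue by contradiction: assuming $F$ is an equivalence, I will deduce that every dagger idempotent of $\CPs[\Rel]$ dagger splits, which contradicts the construction carried out in the proof of Theorem~\ref{thm:FRelnotequivalence}. The starting point is that $F$ is a \emph{dagger} functor (Lemma~\ref{lem:CPsintoSplit}), so that for every dagger idempotent $p$ in $\CPs[\Rel]$ the morphism $F(p)$ is again a dagger idempotent, now living in $\Splitd[\CPM[\Rel]]$. Every dagger idempotent splits in $\Splitd[\CPM[\Rel]]$ --- a dagger idempotent $r$ on an object $(Y,q)$ is dagger split by the object $(Y,r)$, both legs being $r$ itself --- so in particular $F(p)$ is a split idempotent. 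Since $F$, being an equivalence, reflects split idempotents (transport a splitting of $F(p)$ back along $F$, using essential surjectivity to place the splitting object in the image and then fullness and faithfulness to pull back the two legs), the idempotent $p$ splits in $\CPs[\Rel]$.

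The crux is then to show that in $\CPs[\Rel]$ \emph{every splitting of a dagger idempotent is automatically a dagger splitting}. So suppose $p=m\circ e$ with $e\circ m=\id$, where $e\colon A\to B$ and $m\colon B\to A$ are morphisms of $\CPs[\Rel]$. Since $p=p^\dag$, the pair $(m^\dag,e^\dag)$ is a second splitting of $p$, and the canonical comparison isomorphism relating these two splittings of $p$ is $m^\dag\circ m\colon B\to B$. This morphism is positive by construction and is an isomorphism; but the positive isomorphisms of $\CPs[\Rel]$ are all identities --- this is the quoted fact that the only positive isomorphisms in $\Rel$ are identities, transported along the faithful, dagger-preserving forgetful functor $\CPs[\Rel]\to\Rel$ sending a groupoid to its set of morphisms. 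Hence $m^\dag\circ m=\id[B]$, whence $m=p\circ m=(e^\dag\circ m^\dag)\circ m=e^\dag$, so $p=e^\dag\circ e$ and $e\circ e^\dag=\id[B]$, a dagger splitting. Combining this with the previous step, if $F$ were an equivalence then every dagger idempotent of $\CPs[\Rel]$ would dagger split; but the proof of Theorem~\ref{thm:FRelnotequivalence} exhibits a dagger idempotent $R$ on the nine-morphism groupoid $\cat G$ that does not. This is the desired contradiction.

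The main obstacle --- and the reason the corollary does not follow in one line from Theorem~\ref{thm:FRelnotequivalence} --- is that that theorem only rules out a \emph{dagger} equivalence, whereas a full, faithful, essentially surjective dagger functor need not be part of a dagger equivalence: its pseudo-inverse may fail to preserve daggers, and the comparison isomorphisms may fail to be unitary. What bridges the gap is precisely the triviality of positive isomorphisms in $\Rel$, and hence in $\CPs[\Rel]$, which upgrades ordinary idempotent splittings to dagger splittings; this is the only genuinely new ingredient. The remaining points --- that $\Splitd[\CPM[\Rel]]$ splits its own dagger idempotents, and that an equivalence reflects split idempotents --- are routine, and I would verify them briefly before assembling the argument.
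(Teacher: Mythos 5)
Your argument is correct, but it takes a genuinely different route from the paper's. The paper closes the gap between ``equivalence'' and ``dagger equivalence'' directly: it shows that any pseudo-inverse $G$ of $F$ automatically preserves daggers, because the comparison isomorphisms are unitary and $F$ is a faithful dagger functor, so that an equivalence would in fact be a dagger equivalence, contradicting the \emph{statement} of Theorem~\ref{thm:FRelnotequivalence}. You instead contradict the \emph{content of its proof}: an equivalence would reflect the (dagger) splitting of $F(R)$ in $\Splitd[\CPM[\Rel]]$ to an ordinary splitting of $R$ in $\CPs[\Rel]$, and your key new observation --- that in $\CPs[\Rel]$ the comparison isomorphism $m^\dag\circ m$ between a splitting and its adjoint splitting is a positive isomorphism, hence the identity, so every splitting of a dagger idempotent is already a dagger splitting --- upgrades this to a dagger splitting of $R$, which the proof of Theorem~\ref{thm:FRelnotequivalence} rules out. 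I checked the delicate points: $F(p)$ is a dagger idempotent since $F$ is a dagger functor (Lemma~\ref{lem:CPsintoSplit}); $(Y,r)$ with both legs $r$ does dagger split $r$ in $\Splitd[\CPM[\Rel]]$; equivalences reflect split idempotents; and $m^\dag\circ m$ is indeed invertible (with inverse $e\circ e^\dag$) and positive, so its underlying relation is an identity, whence $m=e^\dag$. Both proofs ultimately exploit the same rigidity of $\Rel$ --- the paper via ``all isomorphisms are unitary,'' you via ``all positive isomorphisms are identities'' --- and the paper's version is shorter, but yours isolates a reusable fact (split dagger idempotents in $\CPs[\Rel]$ dagger split) that makes the failure of idempotent-splitting do all the work.
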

\begin{proof}
  Suppose $G \colon \Splitd[\CPM[\Rel]] \to \CPs[\Rel]$ and $F$ form
  an equivalence with natural isomorphism $\eta_X \colon F(G(X)) \to X$.
  Let $g \colon X \to Y \in \Splitd[\CPM[\Rel]]$.
  Because every isomorphism in $\Rel$ is unitary, and $F$ preserves
  daggers, 
  \[
    F(G(g^\dag)) 
    = \eta_X^\dag \circ g^\dag \circ \eta_Y 
    = (\eta_Y^\dag \circ g \circ \eta_X)^\dag
    = F(G(g))^\dag
    = F(G(g)^\dag).
  \]
  Since $F$ is faithful, $G$ must also preserve daggers. So $F$
  and $G$ in fact form a dagger equivalence. But that contradicts the
  previous theorem.
\end{proof}

To show that $F$ is not
an equivalence even when we restrict to splitting just unital
projections $\I$, we need to analyse the isomorphisms in 
$\Splitd[\CPM[\Rel]]$ further. This is what the rest of this section does.

\begin{lemma}
  Dagger idempotents in $\Rel$ are precisely partial equivalence relations.
\end{lemma}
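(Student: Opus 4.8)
The plan is to unfold the two definitions and check that the conditions match. Recall that in $\Rel$ a morphism $p \colon X \to X$ is a relation $p \subseteq X \times X$, its dagger is the converse relation $p^\dag = \{(y,x) \mid (x,y) \in p\}$, and composition is relational: $(x,z) \in p \circ p$ iff there is some $y$ with $(x,y) \in p$ and $(y,z) \in p$. A partial equivalence relation on $X$ is, by definition, a symmetric and transitive relation. So what must be shown is that the two conditions $p^\dag = p$ and $p \circ p = p$ defining a dagger idempotent are together equivalent to $p$ being symmetric and transitive.

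First I would observe that $p^\dag = p$ is precisely the statement that $p$ is symmetric, so it remains to show that, under the assumption that $p$ is symmetric, the equation $p \circ p = p$ is equivalent to transitivity of $p$. The inclusion $p \circ p \subseteq p$ is literally the statement of transitivity: $(x,y),(y,z) \in p$ implies $(x,z) \in p$. Hence a dagger idempotent in $\Rel$ is in particular transitive, and conversely any symmetric transitive relation satisfies $p \circ p \subseteq p$.

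It therefore remains only to verify that a symmetric transitive $p$ also satisfies the reverse inclusion $p \subseteq p \circ p$. Given $(x,y) \in p$, symmetry gives $(y,x) \in p$, and transitivity applied to $(x,y)$ and $(y,x)$ yields $(x,x) \in p$; then $(x,x) \in p$ together with $(x,y) \in p$ witnesses $(x,y) \in p \circ p$. Combining the two inclusions gives $p \circ p = p$, which completes the equivalence. No step here is a genuine obstacle; the only subtlety worth flagging in the write-up is that $p \subseteq p \circ p$ fails for general transitive relations (for instance a single off-diagonal pair is vacuously transitive but has empty square), so this direction really does use symmetry, and both halves of the definition of dagger idempotent are needed.
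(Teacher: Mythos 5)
Your proof is correct and follows essentially the same route as the paper: identify $p^\dag=p$ with symmetry, $p\circ p\subseteq p$ with transitivity, and derive $p\subseteq p\circ p$ by using symmetry to get $(y,x)\in p$, transitivity to get $(x,x)\in p$, and then $(x,x),(x,y)$ as the witnessing pair. If anything, you are slightly more careful than the paper in flagging that this last inclusion needs transitivity as well as symmetry.
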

\begin{proof}
  Clearly $R^\dag = R$ if and only if $R$ is symmetric. Also, $R^2 \subseteq R$ if and only if $R$ is transitive. We will prove that if $R$ is symmetric, then also $R \subseteq R^2$. Suppose $xRz$. Then also $zRx$, and so $xRx$. Hence $xRyRz$ for $y=x$.
\end{proof}

It follows that the category $\Splitd[\Rel]$ has pairs $(X,\eq)$ as
objects, where $X$ is a set, and $\eq$ is a partial equivalence
relation on $X$; morphisms $(X,\eq) \to (Y,\Eq)$ are relations $R
\colon X \to Y$ satisfying $R= \Eq \circ R \circ \eq$.
For a partial equivalence relation $\eq$ on $X$, we write $\Dom(\eq) = \{ x \in X \mid x \sim x \}$.

\begin{lemma}
  Dagger idempotents in $\Rel$ dagger split.
\end{lemma}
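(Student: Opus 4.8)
The plan is to invoke the previous lemma to replace ``dagger idempotent'' by ``partial equivalence relation'', and then produce the splitting explicitly as a quotient. So let $R \colon X \to X$ be a dagger idempotent in $\Rel$; by the previous lemma $R$ is a partial equivalence relation on $X$. Write $D = \Dom(R)$ and observe that the restriction of $R$ to $D \times D$ is a genuine equivalence relation: it is reflexive on $D$ by the definition of $D$, and symmetric and transitive because $R$ is. Let $Y = D/R$ be the set of $R$-equivalence classes, which is an object of $\Rel$, and define a relation $S \colon X \to Y$ by declaring that $x \mathbin{S} C$ holds precisely when $x \in C$.

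I would then verify the two splitting equations by unwinding composition of relations. For $S^\dag \circ S \colon X \to X$: the pair $(x,x')$ is related exactly when there is a class $C$ with $x \in C$ and $x' \in C$; this happens iff $x \mathbin{R} x'$, since two elements lying in a common class are related via any representative (by symmetry and transitivity of $R$), while conversely $x \mathbin{R} x'$ forces $x,x' \in D$ and places them in the same class. Hence $S^\dag \circ S = R$. For $S \circ S^\dag \colon Y \to Y$: the pair $(C,C')$ is related exactly when some $x$ lies in both $C$ and $C'$; since the $R$-classes partition $D$ and each is nonempty, this holds iff $C = C'$, so $S \circ S^\dag = \id[Y]$. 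Thus $R$ dagger splits via $S$.

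I expect no real obstacle here: once the previous lemma identifies dagger idempotents with partial equivalence relations, this is just the standard quotient construction. The only points deserving a moment's care are that the elements of $X \setminus \Dom(R)$ are simply left unrelated by $S$ (which is consistent, since $R$ does not relate them to anything either), and that equivalence classes are nonempty, which is exactly what makes $S \circ S^\dag$ the full identity on $Y$ rather than a proper sub-identity.
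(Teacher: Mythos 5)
Your proof is correct and is essentially the paper's own argument: the paper likewise passes to the partial equivalence relation, forms the quotient $\Dom(\eq)/\eq$, and uses the membership relation between elements and their classes as the splitting (oriented as a relation $\Dom(\eq)/\eq \to X$, i.e.\ the dagger of your $S$, which is immaterial). The two verification computations you give are exactly the ones in the paper.
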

\begin{proof}
  Let $\eq$ be a partial equivalence relation on $X$. Define a
  splitting relation $R \colon \Dom(\eq)/\eq \to X$ by $R=
  \{([x]_{\eq},x) \mid x \in \Dom(\eq)\}$. Then
  \begin{align*}
    R^\dag \circ R
    & = \{([x]_{\eq},[z]_{\eq}) \mid x,z \in \Dom(\eq), \exists y \in X \colon x \sim y \sim z\} \\
    & = \{([x]_{\eq},[z]_{\eq}) \mid x,z \in \Dom(\eq), x \sim z\} \\
    & = \{([x]_{\eq},[x]_{\eq}) \mid x \in \Dom(\eq) \} \\
    & = \id[\Dom(\eq)/\eq],
  \end{align*}
  and $R \circ R^\dag = \{(x,z) \mid x,z \in X, \exists y \in \Dom(\eq)/\eq \colon x \sim y \sim z\} = \eq$.
\end{proof}

Recall that the category $\CPM[\Rel]$ has sets $X$ as objects; morphisms $X \to Y$
are relations $R \colon X \times X \to Y \times Y$ satisfying 
\begin{align}\label{eq:cpmrel} 
  (x,x') R (y,y') \implies (x',x) R (y',y) \wedge (x,x) R (y,y).
\end{align}
 
Hence the category $\Splitd[\CPM[\Rel]]$ has pairs $(X,\eq)$ as
objects, where $X$ is a set, and $\eq$ is a partial equivalence
relation on $X \times X$ satisfying 
\begin{align}\label{eq:split}
  (x,x') \sim (y,y') & \implies (x',x) \sim (y',y) \wedge (x,x) \sim (y,y);
\end{align}
morphisms $(X,\eq) \to (Y,\Eq)$ are relations $R \colon X \times X \to Y \times Y$ satisfying~\eqref{eq:cpmrel} and $R= \Eq \circ R \circ \eq$.

In this description, $F(\cat{G}) = (\Mor(\cat{G}),\eq)$, where $(a,b)
\sim (c,d)$ if and only if $a^{-1} b = c^{-1} d$ (and both
compositions are well-defined). 

When speaking about a partial equivalence relation $\eq$ on $X \times
X$, we will abbreviate $[(x,x')]_{\eq}$ to $[x,x']_{\eq}$.

\begin{lemma}\label{splitisos}
  Objects $(X,\eq)$ and $(Y,\Eq)$ are isomorphic in $\Splitd[\Rel]$ if
  and only if $\Dom(\eq) / \eq$ and $\Dom(\Eq) / \Eq$ are isomorphic
  in $\Rel$ (and hence in $\cat{Set}$).   
  Furthermore, partial equivalence relations $\eq \colon A \times A
  \to A \times A$ and $\Eq \colon B \times B \to B \times B$ are
  isomorphic objects of $\Splitd[\CPM[\Rel]]$ if and only if there is a
  bijection $\alpha \colon \Dom(\eq)/\eq \to \Dom(\Eq)/\Eq$ satisfying 
  \begin{align}\label{eq:quotients}
    \alpha( [a,a']_{\eq} ) = [b,b']_{\Eq} \implies \alpha( [a',a]_{\eq} ) = [b',b]_{\eq} \wedge \alpha([a,a]_{\eq}) = [b,b]_{\Eq}.
  \end{align}
\end{lemma}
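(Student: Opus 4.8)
\emph{Proof proposal.}
The plan is to reduce each isomorphism question to the level of the splitting objects. For the first assertion, recall from the preceding lemma that dagger idempotents in $\Rel$ dagger split: a partial equivalence relation $\eq$ on a set $X$ is split by the relation $\{([x]_{\eq},x)\mid x\in\Dom(\eq)\}\colon \Dom(\eq)/\eq\to X$, and this splitting is unitary in $\Splitd[\Rel]$. Hence every object $(X,\eq)$ of $\Splitd[\Rel]$ is unitarily isomorphic to $(\Dom(\eq)/\eq,\id)$, which lies in the image of the canonical inclusion $\Rel\hookrightarrow\Splitd[\Rel]$, $X\mapsto(X,\id[X])$. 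That inclusion is manifestly full and faithful, so it is a (dagger) equivalence, under which $(X,\eq)$ corresponds to the set $\Dom(\eq)/\eq$. Therefore $(X,\eq)\cong(Y,\Eq)$ in $\Splitd[\Rel]$ if and only if $\Dom(\eq)/\eq\cong\Dom(\Eq)/\Eq$ in $\Rel$; and since isomorphisms of $\Rel$ between sets are exactly the bijections, this coincides with isomorphism in $\cat{Set}$.

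For the second assertion I would first record that sending $(X,\eq)$ to $(X\times X,\eq)$ and a morphism to its underlying relation defines a faithful dagger functor $U\colon\Splitd[\CPM[\Rel]]\to\Splitd[\Rel]$: this is well defined because \eqref{eq:cpmrel} and \eqref{eq:split} are merely extra conditions on morphisms and objects, while composition, identities and daggers are computed exactly as in $\Splitd[\Rel]$. Applying $U$ to an isomorphism of $\Splitd[\CPM[\Rel]]$ yields an isomorphism of $\Splitd[\Rel]$, which is unitary by the first part, so faithfulness of $U$ forces every isomorphism $R\colon(A,\eq)\to(B,\Eq)$ in $\Splitd[\CPM[\Rel]]$ to be unitary, i.e.\ $R^\dagger\circ R=\eq$ and $R\circ R^\dagger=\Eq$ (the identity of $(A,\eq)$ being $\eq$, and of $(B,\Eq)$ being $\Eq$).

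Now, given such a unitary $R$, I would unwind the relational composites. From $R^\dagger\circ R=\eq$, $R$ relates a pair of $B\times B$ to $(a,a')$ exactly when $(a,a')\in\Dom(\eq)$; from $R\circ R^\dagger=\Eq$, this pair is determined up to $\Eq$ and lies in $\Dom(\Eq)$; and from $R\circ\eq=R$ (a consequence of $R=\Eq\circ R\circ\eq$), the result depends only on $[a,a']_{\eq}$. Hence $R$ descends to a function $\alpha\colon\Dom(\eq)/\eq\to\Dom(\Eq)/\Eq$, $R^\dagger$ descends to its inverse, and $\alpha$ is a bijection. Condition \eqref{eq:quotients} is then exactly the translation of \eqref{eq:cpmrel} for $R$: if $\alpha[a,a']_{\eq}=[b,b']_{\Eq}$ then $(a,a')\mathrel{R}(b,b')$, so $(a',a)\mathrel{R}(b',b)$ and $(a,a)\mathrel{R}(b,b)$ by \eqref{eq:cpmrel}, and these pairs again lie in $\Dom(\eq)$, $\Dom(\Eq)$ by \eqref{eq:split}, whence $\alpha[a',a]_{\eq}=[b',b]_{\Eq}$ and $\alpha[a,a]_{\eq}=[b,b]_{\Eq}$. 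Conversely, given a bijection $\alpha$ satisfying \eqref{eq:quotients}, I would define $R$ by letting $(a,a')\mathrel{R}(b,b')$ iff $(a,a')\in\Dom(\eq)$, $(b,b')\in\Dom(\Eq)$ and $\alpha[a,a']_{\eq}=[b,b']_{\Eq}$; then \eqref{eq:cpmrel} for $R$ follows from \eqref{eq:split} and \eqref{eq:quotients}, the identity $R=\Eq\circ R\circ\eq$ is immediate since $R$ factors through the quotients, and bijectivity of $\alpha$ yields $R^\dagger\circ R=\eq$ and $R\circ R^\dagger=\Eq$, so $R$ is the required isomorphism.

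The step I expect to be the main obstacle is the forward direction of the second assertion, and precisely the need to know \emph{in advance} that the isomorphism $R$ is unitary: without that, the bare identities for an isomorphism and its inverse do not obviously force $R$ to be single-valued modulo $\Eq$ on $\eq$-classes, because a relation can behave like a ``split epi'' and relate a pair to strictly more than it receives back. Routing through $U$ and the first part is what resolves this cleanly, so the load-bearing routine verifications are that $U$ is a well-defined faithful dagger functor and that $R^\dagger$ is again a morphism (automatic, since $\Splitd[\CPM[\Rel]]$ is a dagger category). The remaining checks — that the descent of $R$ to the quotients is well defined and that the relation built from $\alpha$ really satisfies \eqref{eq:cpmrel} and has the stated composites — are straightforward, if slightly tedious, pointwise manipulations.
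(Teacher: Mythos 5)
Your argument is correct, and its core content coincides with the paper's: both proofs descend the isomorphism to a bijection between $\Dom(\eq)/\eq$ and $\Dom(\Eq)/\Eq$ and then observe that condition \eqref{eq:quotients} is the translation of \eqref{eq:cpmrel} through that descent. The scaffolding differs, though. The paper never invokes the splitting lemma, the equivalence $\Splitd[\Rel]\simeq\Rel$, or unitarity of the isomorphism: given an arbitrary inverse pair $R,S$ (not assumed adjoint), it defines $U=\{([x]_{\eq},[y]_{\Eq})\mid (x,y)\in R\}$ and $V$ likewise from $S$, computes $V\circ U=\id$ and $U\circ V=\id$ on the quotient sets, and concludes that $U$ is a bijection because isomorphisms of $\Rel$ are graphs of bijections. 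This dissolves, rather than confronts, the obstacle you single out: single-valuedness of $R$ modulo $\Eq$ is extracted from $U$ being an isomorphism on the quotients, not from $S=R^\dag$. Your route --- first proving every isomorphism of $\Splitd[\CPM[\Rel]]$ is unitary via the faithful forgetful functor to $\Splitd[\Rel]$ --- is valid and buys a slightly stronger intermediate fact, but it leans on a claim you state only implicitly: the first part as you phrase it says isomorphic objects have isomorphic quotients, whereas your second part needs that \emph{every} isomorphism of $\Splitd[\Rel]$ is unitary. That does follow from your equivalence argument (conjugate by the unitary splittings and use that isomorphisms of $\Rel$ are unitary), but it deserves an explicit sentence. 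The pointwise verifications relating $R$ and $\alpha$ in both directions agree with the paper's.
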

\begin{proof}
  Suppose $(X,\eq)$ and $(Y,\Eq)$ are isomorphic in $\Splitd[\Rel]$. Say relations $R \colon X \to Y$ and $S \colon Y \to X$ satisfy $S \circ R = \eq$ and $R \circ S = \Eq$. Define relations $U \colon \Dom(\eq) / \eq \to \Dom(\Eq) / \Eq$ and $V \colon \Dom(\Eq) / \Eq \to \Dom(\eq) / \eq$ by $U=\{([x]_{\eq},[y]_{\Eq}) \mid (x,y) \in R\}$ and $V=\{([y]_{\Eq},[x]_{\eq}) \mid (y,x) \in S\}$. Then
  \[
    V \circ U = \{([x]_{\eq},[x]_{\eq}) \mid (x,x') \in S \circ R\} = \{([x]_{\eq},[x]_{\eq}) \mid x \in \Dom(\eq) \} = \id[\Dom(\eq) / \eq],
  \]
  and similarly $U \circ V = \id[\Dom(\Eq) / \Eq]$. Hence $\Dom(\eq)/\eq$ and $\Dom(\Eq)/\Eq$ are isomorphic in $\Rel$.

  Conversely, assume that $\Dom(\eq)/\eq$ and $\Dom(\Eq)/\Eq$ are isomorphic in $\Rel$. Say  $U \colon \Dom(\eq) / \eq \to \Dom(\Eq) / \Eq$ and $V \colon \Dom(\Eq) / \Eq \to \Dom(\eq) / \eq$ satisfy $U \circ V = \id[\Dom(\Eq) / \Eq]$ and $V \circ U = \id[\Dom(\eq) / \eq]$. Define relations $R \colon X \to Y$ and $S \colon Y \to X$ by $R = \{ (x,y) \mid [x]_{\eq} U [y]_{\Eq} \}$ and $S = \{(y,x) \mid [y]_{\Eq} V [x]_{\eq} \}$. Then 
  \[
    \Eq \circ R \circ \eq = \{ (x,y) \mid \exists x',y'\colon x \sim x', y \approx y', [x']_{\eq} U [y']_{\approx} \} = R,
  \]
  and similarly $S$ is a well-defined morphism of $\Splitd[\Rel]$.
  Also
  \[
    S \circ R = \{(x,x') \mid \exists y\colon [x]_{\eq} U [y]_{\Eq} V [x']_{\eq} \} = \{(x,x') \mid ([x]_{\eq},[x']_{\eq}) \in \id[\Dom(\eq)/\eq] \} = \eq,
  \]
  and similarly $R \circ S = \Eq$. So $(X,\eq)$ and $(Y,\Eq)$ are isomorphic in $\Splitd[\Rel]$.

  In case $X=A \times A$ and $Y=B \times B$, notice that $R$ and $S$  satisfy~\eqref{eq:cpmrel} if and only if the bijection $\alpha=U$ and its inverse $\alpha^{-1}=V$ satisfy~\eqref{eq:quotients}. Finally, $\alpha^{-1}$ satisfies~\eqref{eq:quotients} precisely when $\alpha$ does.
\end{proof}

\begin{lemma}\label{quotientFG}
  If $\cat{G}$ is a small groupoid and
  $F(\cat{G})=(\Mor(\cat{G}),\eq)$, then $\Dom(\eq)/\eq$ is in
  bijection with $\Mor(\cat{G})$. 
  Furthermore, an object $(X,\eq)$ of $\Splitd[\CPM[\Rel]]$ is isomorphic to $F(\cat{G})$ for a small groupoid $\cat{G}$ if and only if there is a bijection $\beta \colon \Mor(\cat{G}) \to \Dom(\eq)/\eq$ satisfying
  \begin{align}\label{eq:imageF}
    \beta(g) = [x,x']_{\eq} \implies \beta(g^{-1}) = [x',x]_{\eq} \wedge \beta(\id[\dom(g)]) = [x,x]_{\eq}
  \end{align}
  for all $g \in \Mor(\cat{G})$.
\end{lemma}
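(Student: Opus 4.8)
The plan is to establish the two claims separately: the first by writing down an explicit bijection, and the second by transporting the isomorphism criterion of Lemma~\ref{splitisos} along that bijection.

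For the first claim, I would write $\eq_{\cat{G}}$ for the partial equivalence relation of $F(\cat{G})=(\Mor(\cat{G}),\eq_{\cat{G}})$, so that $(a,b)\,\eq_{\cat{G}}\,(c,d)$ iff $a^{-1}b=c^{-1}d$ with both composites defined, and define $\gamma\colon\Dom(\eq_{\cat{G}})/\eq_{\cat{G}}\to\Mor(\cat{G})$ by $\gamma([a,b]_{\eq_{\cat{G}}})=a^{-1}b$. A pair $(a,b)$ lies in $\Dom(\eq_{\cat{G}})$ exactly when $(a,b)\,\eq_{\cat{G}}\,(a,b)$, that is, exactly when $a^{-1}b$ is defined, so the formula makes sense there; and by the defining relation of $\eq_{\cat{G}}$ two pairs of $\Dom(\eq_{\cat{G}})$ are $\eq_{\cat{G}}$-related if and only if they have the same image under $(a,b)\mapsto a^{-1}b$. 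Hence $\gamma$ is well defined and injective. It is surjective because every $g\in\Mor(\cat{G})$ equals $\id[\dom(g)]^{-1}g$, so $g=\gamma([\id[\dom(g)],g]_{\eq_{\cat{G}}})$. Thus $\gamma$ is a bijection, which is the first assertion.

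For the second assertion, I would apply the $\Splitd[\CPM[\Rel]]$ part of Lemma~\ref{splitisos} with $A=\Mor(\cat{G})$ carrying $\eq_{\cat{G}}$ (a legitimate object of $\Splitd[\CPM[\Rel]]$ by Lemma~\ref{lem:CPsintoSplit}) and with $B=X$ carrying $\eq$: then $(X,\eq)$ is isomorphic to $F(\cat{G})$ in $\Splitd[\CPM[\Rel]]$ if and only if there is a bijection $\alpha\colon\Dom(\eq_{\cat{G}})/\eq_{\cat{G}}\to\Dom(\eq)/\eq$ satisfying \eqref{eq:quotients}. Precomposing with the bijection $\gamma$ of the first part, the rule $\beta=\alpha\circ\gamma^{-1}$ is a bijective correspondence between such $\alpha$ and bijections $\beta\colon\Mor(\cat{G})\to\Dom(\eq)/\eq$; so what remains is to check that $\alpha$ satisfies \eqref{eq:quotients} exactly when $\beta$ satisfies \eqref{eq:imageF}. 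For $(a,b)\in\Dom(\eq_{\cat{G}})$ with $g=a^{-1}b$ one has $\gamma([b,a]_{\eq_{\cat{G}}})=b^{-1}a=g^{-1}$ and $\gamma([a,a]_{\eq_{\cat{G}}})=a^{-1}a=\id[\dom(g)]$, and feeding these into \eqref{eq:quotients} turns it verbatim into \eqref{eq:imageF}; conversely every $g\in\Mor(\cat{G})$ is of the form $a^{-1}b$ for such a pair (take $a=\id[\dom(g)]$, $b=g$), so the two conditions are genuinely equivalent. Combining this with Lemma~\ref{splitisos} yields the stated characterisation.

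The well-definedness and injectivity of $\gamma$ are immediate from the defining relation of $\eq_{\cat{G}}$, and the transport of the isomorphism criterion along $\gamma$ is routine. The one step that needs care is the identification of \eqref{eq:quotients} with \eqref{eq:imageF}, and specifically keeping the source/target decorations on the correct side: one must note that $a^{-1}b$ is defined exactly when $\dom(a)=\dom(b)$, that then $\dom(a^{-1}b)=\cod(a)$, and hence that $a^{-1}a=\id[\cod(a)]=\id[\dom(g)]$ for $g=a^{-1}b$. With that settled, the rest is straightforward bookkeeping.
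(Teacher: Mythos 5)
Your proof is correct and takes essentially the same route as the paper: both establish the bijection $\gamma([a,b]_{\eq})=a^{-1}b$ between $\Dom(\eq)/\eq$ and $\Mor(\cat{G})$ (the paper also writes down its inverse $\beta(h)=[\id[\cod(h)],h]_{\eq}$ explicitly) and then deduce the second claim by transporting the isomorphism criterion of Lemma~\ref{splitisos} along this bijection. Your explicit check that condition \eqref{eq:quotients} translates into \eqref{eq:imageF}, including the $\dom$/$\cod$ bookkeeping, is if anything more detailed than the paper, which simply asserts that the second statement follows from Lemma~\ref{splitisos}.
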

\begin{proof}
  Define functions $\gamma \colon \Dom(\eq)/\eq \to \Mor(\cat{G})$ and
  $\beta \colon \Mor(\cat{G}) \to \Dom(\eq)/\eq$ by
  $\gamma([g,f]_{\eq}) = g^{-1} f$ and $\beta(h) =
  [\id[\cod(h)],h]_{\eq}$. Then  
  \[
    \beta \circ \gamma([g,f]_{\eq}) = \gamma(g^{-1}f) = [\id[\dom(g)],g^{-1}f]_{\eq} = [g,f]_{\eq}
  \]
  and $\gamma \circ \beta(h) = h$.
  The second statement now follows from Lemma~\ref{splitisos}.
\end{proof}

\begin{theorem}
  The functor $F \colon \CPs[\Rel] \to \Split{\I}[\CPM[\Rel]]$ is not an equivalence.
\end{theorem}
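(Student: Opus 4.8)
The plan is to show that $F$ is full and faithful but not essentially surjective. Fullness and faithfulness come for free: Lemma~\ref{lem:CPsintoSplit} shows that $\CPs[\Rel]\to\Splitd[\CPM[\Rel]]$ is full and faithful, and since this functor factors through the full subcategory $\Split{\I}[\CPM[\Rel]]$, its corestriction $F$ remains full and faithful. So everything rests on producing an object of $\Split{\I}[\CPM[\Rel]]$ that is not isomorphic to $F(\cat{H})$ for any small groupoid $\cat{H}$. The heavy lifting of understanding isomorphisms has already been done in Lemmas~\ref{splitisos} and~\ref{quotientFG}; the main thing to get right is choosing a concrete unital dagger idempotent and reading off its quotient data $\Dom(\eq)/\eq$ together with the induced ``inverse'' and ``source'' operations correctly.

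Concretely, I would take $X=\{1,2,3\}$ and let $\eq$ be the partial equivalence relation on $X\times X$ with exactly the three classes $o=\{(1,1),(2,2),(3,3)\}$, $a=\{(1,2),(2,1)\}$, and $b=\{(1,3),(3,1)\}$, so that $(2,3)$ and $(3,2)$ lie outside $\Dom(\eq)$. The verifications are routine: $\eq$ is symmetric and transitive; each class is closed under swapping the two coordinates, and whenever a class contains a pair $(x,x')$ it sends $(x,x)$ into $o$, so \eqref{eq:split} holds and $(X,\eq)$ is an object of $\Splitd[\CPM[\Rel]]$; finally the only class meeting the diagonal of $X\times X$ is $o$, and $o$ is the whole diagonal, so $\eq$ is unital and $(X,\eq)\in\Split{\I}[\CPM[\Rel]]$. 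By construction $Q:=\Dom(\eq)/\eq=\{o,a,b\}$, the involution $[x,x']_{\eq}\mapsto[x',x]_{\eq}$ is the \emph{identity} on $Q$ (every class is swap-closed), and the source map $[x,x']_{\eq}\mapsto[x,x]_{\eq}$ is the \emph{constant} map at $o$.

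Finally, suppose towards a contradiction that $(X,\eq)\cong F(\cat{H})$ in $\Split{\I}[\CPM[\Rel]]$, equivalently in $\Splitd[\CPM[\Rel]]$ since $\Split{\I}[\CPM[\Rel]]$ is a full subcategory. By Lemma~\ref{quotientFG} there is a bijection $\beta\colon\Mor(\cat{H})\to Q$ satisfying \eqref{eq:imageF}, so $\cat{H}$ has exactly three morphisms. Because the source map on $Q$ is constant at $o$, \eqref{eq:imageF} forces $\id[\dom(g)]=\beta^{-1}(o)$ for every $g\in\Mor(\cat{H})$; writing $e=\beta^{-1}(o)$ and $x_0=\dom(e)$, every morphism of $\cat{H}$ has domain $x_0$. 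Because the involution on $Q$ is the identity, \eqref{eq:imageF} also forces $g^{-1}=g$, hence $\cod(g)=\dom(g^{-1})=x_0$, for every $g$. So all three morphisms of $\cat{H}$ are endomorphisms of $x_0$, and under composition they form a group of order $3$ in which every element is its own inverse. But a group of exponent $2$ is abelian and therefore has order a power of $2$, contradicting that it has order $3$. Hence no such $\cat{H}$ exists, $F$ is not essentially surjective, and therefore not an equivalence. (This is in the same spirit as Theorem~\ref{thm:FRelnotequivalence}: a simple groupoid-theoretic obstruction, here the nonexistence of an exponent-$2$ group of order $3$, keeps a legitimate object of $\Split{\I}[\CPM[\Rel]]$ out of the image of $F$.)
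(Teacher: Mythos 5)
Your proof is correct, and it shares the paper's overall strategy---show $F$ is not essentially surjective by exhibiting a unital object of $\Split{\I}[\CPM[\Rel]]$ that Lemma~\ref{quotientFG} rules out of the image---but the witness and the final obstruction are genuinely different. The paper takes the seven singleton classes $\{(x,x')\}$ for $(x,x')\notin\{(0,2),(2,0)\}$, so that $\Dom(\eq)/\eq$ has seven elements; the would-be groupoid then has three objects and seven morphisms, with arrows $[0,0]_{\eq}\to[1,1]_{\eq}$ and $[1,1]_{\eq}\to[2,2]_{\eq}$ but no room for their composite, and the obstruction is closure under composition (mirroring the non-splitting idempotent of Theorem~\ref{thm:FRelnotequivalence}). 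Your example instead collapses the quotient to three classes whose induced involution is the identity and whose source map is constant; Lemma~\ref{quotientFG} then forces any preimage $\cat{H}$ to be a one-object groupoid (any further object would contribute an identity with a different domain), i.e.\ a group of order $3$ of exponent dividing $2$, which does not exist. Both witnesses are easily checked to satisfy \eqref{eq:split} and unitality, so both are legitimate; yours buys a one-line group-theoretic contradiction at the cost of a slightly longer read-off of the quotient data, while the paper's makes the connection to its earlier dagger-splitting counterexample visible. (Your opening remarks on fullness and faithfulness are fine but unnecessary: failure of essential surjectivity alone already precludes an equivalence.)
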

\begin{proof}
  In the setting of the second statement of Lemma~\ref{quotientFG},
  the identities of $\cat{G}$ must be the morphisms
  $\beta^{-1}([x,x]_{\eq})$ for $x \in X$. Therefore we may restrict to groupoids with
  $\Ob(\cat{G}) = \{ [x,x]_{\eq} \mid x \in X\}$. Furthermore, it then
  follows from~\eqref{eq:imageF} 
  that $\beta^{-1}[x,x']_{\eq}$ is a morphism $[x,x]_{\eq} \to
  [x',x']_{\eq}$.
  The same counterexample as in the proof of
  Theorem~\ref{thm:FRelnotequivalence} now shows that $F$ is not an
  equivalence. Take $X=\{0,1,2\}$, and let $\eq$ be specified by
  \begin{align*}
    & (0,0) \sim (0,0), \qquad (1,1) \sim (1,1), \qquad (2,2) \sim (2,2), \\
    & (0,1) \sim (0,1), \qquad (1,0) \sim (1,0), \\
    & (1,2) \sim (1,2), \qquad (2,1) \sim (2,1);
  \end{align*}
  no other pairs satisfy $(x,x') \sim (y,y')$. In particular, $(0,2)
  \not\sim (0,2)$. Then $\eq$ is a partial equivalence relation that
  satisfies~\eqref{eq:split}, and so $(X,\eq)$ is a well-defined object
  in $\Splitd[\CPM[\Rel]]$. Now suppose that $(X,\eq)$ is isomorphic to
  $F(\cat{G})$. As discussed above, we may assume that $\cat{G}$ has
  three objects $0,1,2$ and seven morphisms, with types as follows.
  \[\xymatrix@C+10ex{
    [0,0]_{\eq} \ar@(ul,ur)^-{\beta^{-1}[0,0]_{\eq}} 
    \ar@<.5ex>^-{\beta^{-1}[0,1]_{\eq}}[r]
    & [1,1]_{\eq} \ar@(ul,ur)^-{\beta^{-1}[1,1]_{\eq}}
    \ar@<.5ex>^-{\beta^{-1}[1,0]_{\eq}}[l]
    \ar@<.5ex>^-{\beta^{-1}[1,2]_{\eq}}[r]
    & [2,2]_{\eq} \ar@(ul,ur)^-{\beta^{-1}[2,2]_{\eq}} 
    \ar@<.5ex>^-{\beta^{-1}[2,1]_{\eq}}[l]
  }\]
  But this can never be made into a groupoid: there are
  arrows $[0,0]_{\eq} \to [1,1]_{\eq}$ and $[1,1]_{\eq} \to
  [2,2]_{\eq}$, but no morphisms $[0,0]_{\eq} \to [2,2]_{\eq}$, so no 
  composition can be defined. We conclude that the essential image of
  $F$ is not all of $\Splitd[\CPM[\Rel]]$.

  In fact, $(X,\eq)$ is an object of $\Split{\I}[\CPM[\Rel]]$, i.e.\ it
  is unital (and therefore trace-preserving) precisely when $(x,x) \in D(\eq)$ for
  all $x \in X$. Since the above counterexample satisfies this, the
  restriction $F \colon \CPs[\Rel] \to \Split{\I}[\CPM[\Rel]]$ is not
  an equivalence.
\end{proof}

\section{Biproducts}

This section shows that if $\cat{V}$ has biproducts, then so does $\CPs[\V]$,
and there is a full and faithful functor $\CPM[\V]^\oplus \to \CPs[\V]$. Furthermore, this functor is an equivalence for $\V=\Hilb$, but not for $\V=\Rel$. 

Early in the development of categorical quantum mechanics, classical
information was modelled by biproducts. Since categories of completely
positive maps need not inherit biproducts from their base category,
biproducts had to be explicitly added to $\CPM[V]$. Later on,
Frobenius algebras were proposed as an alternative to biproducts. We now come
full circle by proving a satisfying relationship between Frobenius
algebras, completely positive maps, and biproducts.
This requires quite some detailed (matrix) calculations. We
first summarise the basic interaction of biproducts and dual
objects.

Recall that a \emph{zero object} is a terminal initial object. A zero
object induces unique \emph{zero maps} from any object to any other
object that factor through the zero object. A \emph{biproduct} of
objects $A$ and $B$ consists of an object 
$A \oplus B$ together with morphisms $\xymatrix@1{A
\ar@<.5ex>^-{i_A}[r] & A \oplus B \ar@<-.5ex>_-{p_B}[r]
\ar@<.5ex>^-{p_A}[l] & B \ar@<-.5ex>_-{i_B}[l]}$, such that $A \oplus
B$ is simultaneously a product of $A$ and $B$ with projections $p_A$
and $p_B$ and a coproduct of $A$ and $B$ with injections $i_A$ and
$i_B$, satisfying $p_A \circ i_A = \id[A]$, $p_B \circ i_B = \id[B]$,
$p_A \circ i_B = 0$, and $p_B \circ i_A = 0$. A category has
\emph{dagger biproducts} when it has a zero object and biproducts of
any pair of objects such that $p_A=i_A^\dag$ and $p_B=i_B^\dag$.

Categories with biproducts are automatically enriched over commutative
monoids: $f+g = [\id,\id] \circ (f \oplus g) \circ \langle \id, \id
\rangle$. This means that morphisms between biproducts of objects can
be handled using a matrix calculus.
We will also write $\Delta_A$ for the diagonal tuple $\langle
\id, \id \rangle = \left(\begin{smallmatrix} \id \\
    \id \end{smallmatrix}\right) \colon A \to A \oplus A$. 

In a compact category $\cat{C}$, the functor $- \otimes A \colon
\cat{C} \to \cat{C}$ is both left and right adjoint to the functor $-
\otimes A^*$. If $\cat{C}$ has a zero object, it follows directly that
$A \otimes 0 \cong 0$ for any object $A$. Consequently, if $f$ is any
morphism, then $f \otimes 0$ factors through $\dom(f) \otimes 0$ and
must therefore equal the zero morphism.

The adjunctions also imply that $- \otimes A$ preserves both limits
and colimits. So if $\cat{C}$ has biproducts, then $\otimes$
distributes over $\oplus$. Consequently, the following morphisms are
each other's inverse. 
  \[\xymatrix@R+7ex{
    (A \oplus B) \otimes (C \oplus D)
    \ar@<1ex>[d]^-{\begin{pmatrix} p_A \otimes p_C \\ p_A \otimes p_D
        \\ p_B \otimes p_C \\ p_B \otimes p_D \end{pmatrix}}
    \\
    (A \otimes C) \oplus (A \otimes D) \oplus (B \otimes C) \oplus (B \otimes D)
    \ar@<1ex>[u]^-{\begin{pmatrix} i_A \otimes i_C & i_A \otimes i_D
        & i_B \otimes i_C & i_B \otimes i_D \end{pmatrix}}
  }\]
It follows that $f \otimes (g+h) = (f \otimes g) + (f \otimes h)$ and
$(f+g) \otimes h = (f \otimes h) + (g \otimes h)$.
 Also
\begin{align*}
  \cat{C}((A \oplus B)^*, C)
  & \cong \cat{C}(I, (A \oplus B) \otimes C) \\
  & \cong \cat{C}(I, (A \otimes C) \oplus (B \otimes C)) \\
  & \cong \cat{C}(I, A \otimes C) \times \cat{C}(I, B \otimes C) \\
  & \cong \cat{C}(A^*, C) \times \cat{C}(B^*, C) \\
  & \cong \cat{C}(A^* \oplus B^*, C),
\end{align*}
so by the Yoneda lemma $(A \oplus B)^* \cong A^* \oplus B^*$.
Tracing through the steps carefully, we may in fact choose the
following unit and counit for compactness:
\begin{align*}
  \varepsilon_{A \oplus B} & = (\varepsilon_A \circ (p_A \otimes
  p_{A^*})) + (\varepsilon_B \circ (p_B \otimes p_{B^*})) \colon
  (A\oplus B) \otimes (A^* \oplus B^*) \to I, \\
  \eta_{A \oplus B} & = ((i_{A^*} \otimes i_A) \circ \eta_A) +
  ((i_{B^*} \otimes i_B) \circ \eta_B) \colon I \to (A^*\oplus B^*)
  \otimes (A \oplus B).
\end{align*}

\begin{lemma}\label{lem:nfabiprod}
  If $(A,m_A,u_A)$ and $(B,m_B,u_B)$ are normal dagger Frobenius
  algebras in a dagger compact category with dagger biproducts, then
  \begin{align*}
    m_{A \oplus B} &= \begin{pmatrix} m_A \circ (p_A \otimes p_A) \\
      m_B \circ (p_B \otimes p_B) \end{pmatrix} \colon (A \oplus B)
    \otimes (A \oplus B) \to (A \oplus B) \\
    u_{A \oplus B} &= \begin{pmatrix} u_A \\ u_B \end{pmatrix} \colon
    I \to A \oplus B
  \end{align*}
  make $A\oplus B$ into a normal dagger Frobenius
  algebra. Furthermore, $0$ is uniquely made into a normal
  dagger Frobenius algebra by
  \[
    m_0 = 0 \colon 0 \otimes 0 \to 0, 
    \qquad
    u_0 = 0 \colon I \to 0.
  \]
\end{lemma}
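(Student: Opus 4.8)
The plan is to use the matrix calculus afforded by the dagger biproducts together with the distributivity of $\otimes$ over $\oplus$ recorded above. With respect to the canonical isomorphism $(A \oplus B)^{\otimes n} \cong \bigoplus_{w} w$, where $w$ ranges over the $2^n$ words obtained by replacing each tensor factor by $A$ or $B$, every morphism built from $A \oplus B$ and its tensor powers becomes a block matrix. In this notation $m_{A\oplus B}$ and $u_{A\oplus B}$ are \emph{block diagonal}: $m_{A\oplus B}$ has $m_A$ in the block from $A\otimes A$ to $A$, has $m_B$ in the block from $B\otimes B$ to $B$, and is zero in every other block; and $u_{A\oplus B} = \begin{pmatrix} u_A \\ u_B\end{pmatrix}$. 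Since dagger biproducts satisfy $p = i^\dag$ and both $\otimes$ and $\oplus$ are compatible with the dagger, $m_{A\oplus B}^\dag$ and $u_{A\oplus B}^\dag$ are obtained by transposing the block matrix and daggering the entries, so they are again block diagonal, with entries $m_A^\dag, m_B^\dag$ and $u_A^\dag, u_B^\dag$ respectively.

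First I would check that $(A\oplus B, m_{A\oplus B}, u_{A\oplus B})$ is a dagger Frobenius algebra. Associativity, the two unit laws, and the Frobenius identity are each an equation between parallel morphisms on tensor powers of $A \oplus B$; writing both sides as block matrices, the identities $p_A i_A = \id[A]$, $p_B i_B = \id[B]$, $p_A i_B = 0$, $p_B i_A = 0$ collapse every block-matrix product to a single summand. The ``pure'' blocks---those whose inputs and output all lie in $A$, respectively all in $B$---reduce to exactly the corresponding axiom for $(A, m_A, u_A)$, respectively $(B, m_B, u_B)$; every mixed block is a composite containing a factor $p_A i_B$ or $p_B i_A$ and therefore vanishes on both sides. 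The only mildly delicate bookkeeping is in the Frobenius identity, where one must keep track of the distributivity isomorphism for $(A\oplus B)^{\otimes 3}$, but the collapsing mechanism is identical.

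Next I would verify normality. As $A$ and $B$ are normal, their normalisers are $\id[A]$ and $\id[B]$, and $\id[A\oplus B]$ is trivially central for $m_{A\oplus B}$ and trivially a positive isomorphism; so it suffices to show that $(A\oplus B, m_{A\oplus B}, u_{A\oplus B})$ satisfies the normalisability equation, in the form of \eqref{eqn-lemma-2.9-2.10}, with normaliser $\id[A\oplus B]$. That equation is built only from $m_{A\oplus B}$, $m_{A\oplus B}^\dag$, $u_{A\oplus B}$, $u_{A\oplus B}^\dag$ and the cap and cup of the Frobenius structure, the latter being themselves composites of the former and hence block diagonal; one checks moreover that the chosen compact unit $\eta_{A\oplus B}$ and counit $\varepsilon_{A\oplus B}$ are block diagonal too. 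Consequently the auxiliary action and coaction maps are block diagonal, the whole normalisability equation decomposes blockwise, and it holds because it holds for $A$ and for $B$.

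Finally, for the zero object: $0\otimes 0 \cong 0$, and since $0$ is a zero object every hom-set with codomain $0$ is a singleton, so $m_0 \colon 0 \otimes 0 \to 0$ and $u_0 \colon I \to 0$ can only be the zero morphisms; in particular $\id[0] = 0$. Every Frobenius and normalisability axiom for $0$ is then an equation between morphisms whose codomain is (isomorphic to) $0$, hence an equation between the unique such morphisms, and so holds automatically. I expect the main obstacle to be purely organisational---setting up block-matrix conventions for the triple tensor power precisely enough that the vanishing of the mixed blocks in the Frobenius identity is transparent rather than an index morass---since the conceptual content, that block-diagonal algebra structures satisfy block-diagonal axioms, is routine once the notation is in place.
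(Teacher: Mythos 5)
Your proposal is correct and follows essentially the same route as the paper: both proofs reduce everything to blockwise matrix computations using distributivity of $\otimes$ over $\oplus$, with the mixed blocks killed by $p_A \circ i_B = 0$, and both handle the zero object by uniqueness of morphisms into/out of $0$. The only difference is cosmetic: the paper verifies normality, associativity and the Frobenius law and then gets the unit laws for free from compactness (citing that unitality is automatic for an associative Frobenius structure in a compact category), whereas you check the unit laws directly by the same block argument.
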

\begin{proof}
  Verifying the required properties is a matter of equational
  rewriting of matrices. For example, to show that $(A \oplus B, m_{A
    \oplus B}, u_{A \oplus B})$ is normal:
  \begin{align*}
    & \varepsilon_{A \oplus B} \circ (\id[A^* \oplus B^*] \otimes m_{A
      \oplus B}) \circ (\eta_{A \oplus B} \otimes \id[A \oplus B])
    \\
    & = [ \varepsilon_A \circ (\id[A^*] \otimes m_A)
      \circ (\eta_A \otimes \id[A]) , \varepsilon_B \circ
      (\id[B^*] \otimes m_B) \circ (\eta_B \otimes \id[B]) ] \\
    & = [ u_A^\dag , u_B^\dag ]
    = u_{A \oplus B}^\dag.
  \end{align*}
  One similarly verifies associativity and the Frobenius law. Because
  $\V$ is compact, unitality then follows automatically~\cite[Proposition~7]{abramskyheunen:hstar}.
  As for $(0,m_0,u_0)$: 
  all required diagrams commute because they are in fact equal to the
  zero morphism, and hence the multiplication $m_0$ is unique.
\end{proof}

\begin{theorem}\label{thm:biproducts}
  If $\V$ is dagger compact with dagger biproducts, so is $\CPs[\V]$.
\end{theorem}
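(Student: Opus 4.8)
Since $\CPs[\V]$ is already dagger compact whenever $\V$ is (as recalled earlier), only the dagger biproducts have to be constructed. The plan is to transfer the problem along the dagger equivalence of Remark~\ref{rem:CPsn} and work in $\CPsn[\V]$: possessing dagger biproducts is invariant under dagger equivalence, since a dagger equivalence preserves the zero object, products, coproducts, and daggers. In $\CPsn[\V]$, Lemma~\ref{lem:nfabiprod} already supplies the candidates — the zero object $0$ with $m_0 = u_0 = 0$, and the biproduct $(A\oplus B, m_{A\oplus B}, u_{A\oplus B})$ with the matrix formulas stated there, carrying the compact structure $\eta_{A\oplus B},\varepsilon_{A\oplus B}$ displayed just above that lemma — and confirms that these are objects of $\CPsn[\V]$. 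Since a morphism of $\CPsn[\V]$ is by definition a morphism of $\V$ satisfying~\eqref{eqn-f-cp}, the forgetful functor $\CPsn[\V]\to\V$ is faithful. Hence, once I verify that the $\V$-level structure maps (zero maps, injections $i_A,i_B$, projections $p_A,p_B$) and the $\V$-level mediating maps (tuples and cotuples) all satisfy~\eqref{eqn-f-cp}, their defining equations — which already hold in $\V$ — hold verbatim in $\CPsn[\V]$, making $0$ a zero object and $A\oplus B$ a biproduct; and since $p_A = i_A^\dag$, $p_B = i_B^\dag$ already in $\V$ while $\CPsn[\V]$ inherits daggers, these are automatically \emph{dagger} biproducts.

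Three verifications are then needed. First, a zero morphism is completely positive: taking $g=0$ in~\eqref{eqn-f-cp} makes both sides the zero morphism, using that tensoring with $0$ factors through a zero object, as noted before Lemma~\ref{lem:nfabiprod}; so the unique $\V$-maps $0\to A$ and $A\to 0$ are morphisms of $\CPsn[\V]$, and $0$ is a zero object there. Second, the projections $p_A,p_B$ are morphisms of $\CPsn[\V]$; the injections $i_A=p_A^\dag$, $i_B=p_B^\dag$ then are too, as $\CPsn[\V]$ has daggers. Third, a sum of completely positive maps is completely positive: if $f_1,f_2\colon(A,\dots)\to(B,\dots)$ are witnessed by $g_i\colon A\to X_i\otimes B$ as in~\eqref{eqn-f-cp}, then $g=(i_{X_1}\otimes\id[B])\circ g_1 + (i_{X_2}\otimes\id[B])\circ g_2\colon A\to(X_1\oplus X_2)\otimes B$ witnesses $f_1+f_2$, because $\varepsilon_{X_1\oplus X_2}$ decomposes into its two diagonal blocks — exactly the formula for $\varepsilon_{A\oplus B}$ recalled above — and thus kills the cross terms. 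Granting these, the biproduct identities ($p_A i_A=\id[A]$, $p_B i_B=\id[B]$, $p_A i_B=0=p_B i_A$, and $i_A p_A + i_B p_B = \id[A\oplus B]$), valid in $\V$, become equalities between morphisms of $\CPsn[\V]$ — composites and, by the third point, sums of completely positive maps — hence hold in $\CPsn[\V]$; the standard characterisation of biproducts by these identities finishes the argument. Equivalently, $\langle f,g\rangle = i_A\circ f + i_B\circ g$ exhibits the product mediating maps as completely positive, and dually for the coproduct.

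The crux is the second verification: that $p_A\colon(A\oplus B,m_{A\oplus B},u_{A\oplus B})\to(A,m_A,u_A)$ satisfies~\eqref{eqn-f-cp}. This is where the detailed matrix calculations anticipated before the theorem are genuinely needed: one must expand the action of $A$, the map $p_A$, and the coaction of $A\oplus B$ occurring in~\eqref{eqn-f-cp} using the formulas of Lemma~\ref{lem:nfabiprod} and the chosen $\eta_{A\oplus B},\varepsilon_{A\oplus B}$, and produce an explicit $X$ and $g$. The natural guess is the trivial environment $X=I$ with $g$ essentially $p_A$ itself: the matrix form of $m_{A\oplus B}$ shows at once that $p_A$ is a unital homomorphism of Frobenius algebras ($p_A\circ m_{A\oplus B}=m_A\circ(p_A\otimes p_A)$ and $p_A\circ u_{A\oplus B}=u_A$), so that, exactly as for *-homomorphisms in the Hilbert-space picture, $p_A$ should be completely positive with trivial environment; making this precise — checking that with $X=I$ and $g=p_A$ the two sides of~\eqref{eqn-f-cp} agree — is the computation to carry out. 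Once this identity is in place, the remainder is routine bookkeeping in the biproduct matrix calculus.
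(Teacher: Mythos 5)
Your overall architecture parallels the paper's: reduce to $\CPsn[\V]$ via Remark~\ref{rem:CPsn}, take the candidate objects from Lemma~\ref{lem:nfabiprod}, and reduce everything to showing that the structural maps of the $\V$-biproduct are morphisms of $\CPsn[\V]$. Your endgame differs harmlessly: you invoke the additive-enrichment characterisation of biproducts (which is why you also need, and correctly prove, closure of complete positivity under sums via the environment $X_1\oplus X_2$), whereas the paper shows $(\oplus,0)$ is a symmetric monoidal structure in which every object carries a unique comonoid, hence a product, and products in a compact category are biproducts. Either route would close the argument \emph{if} the structural maps were known to be morphisms.

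That is where the genuine gap lies, in the step you yourself identify as the crux. The projection $p_A\colon A\oplus B\to A$ does \emph{not} in general satisfy \eqref{eqn-f-cp} with trivial environment $X=I$ and $g=p_A$. Take $\V=\FHilb$, $A=\C^2$ and $B=\C$ with their classical commutative Frobenius structures, so that $A\oplus B=\C^3$. Then the composite on the left of \eqref{eqn-f-cp} is the map $\M_3\to\M_2$ sending $\rho$ to $\mathrm{diag}(\rho_{11},\rho_{22})$, whose Kraus decomposition requires the two operators $\ket{i}\bra{i}$, $i=1,2$; the form with $X=I$ and $g=p_A$ is instead $\rho\mapsto P\rho P^\dag$, the upper-left $2\times 2$ corner of $\rho$, and the two disagree whenever $\rho_{12}\neq 0$. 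More structurally, $p_A\circ\whitecoaction_{A\oplus B}=\whitecoaction_{A}\circ(p_{A^*}\otimes p_A)$, so the left-hand side of \eqref{eqn-f-cp} contains the factor $\whiteaction\circ\whitecoaction$ on $A$, which is the identity only when $A$ is a matrix algebra. The repair is exactly what the paper does: verify that $p_A$ (and likewise $0$, $i_A$, $\Delta_A$) preserves not only multiplication and unit but also the involution $s$, i.e.\ is a *-homomorphism, and then invoke \cite[Lemma~3.7]{coeckeheunenkissinger:cpstar}, which produces a completely positive witness built from the coaction, with a nontrivial environment. Note also that the involution check cannot be skipped: a unital multiplicative map that fails to preserve the involution need not be positive, so your observations $p_A\circ m_{A\oplus 0}=m_A\circ(p_A\otimes p_A)$ and $p_A\circ u_{A\oplus 0}=u_A$ are necessary but not sufficient.
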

\begin{proof}
  Because dagger biproducts are preserved under dagger equivalences,
  it suffices to prove that $\CPsn[\V]$ has dagger biproducts by Remark~\ref{rem:CPsn}.
  We claim that the objects defined in Lemma~\ref{lem:nfabiprod} in
  fact form dagger biproducts in $\CPsn[\V]$. We prove this according
  to the following strategy. Any object $A$ of $\CPsn[\V]$ gives
  morphisms $0 \colon A \to 0$, $i_A \colon A \to A \oplus 0$, $p_A
  \colon A \oplus 0 \to A$, and $\Delta_A \colon A \to A \oplus A$ in
  $\V$. We will show that these are all  
  *-ho\-mo\-mor\-phisms (see~\cite[Definition~3.6]{coeckeheunenkissinger:cpstar}), and
  hence well-defined morphisms in $\CPsn[\V]$ 
  by~\cite[Lemma~3.7]{coeckeheunenkissinger:cpstar}. 
  Furthermore, it is easy to see that if $f$
  and $g$ are morphisms in $\CPsn[\V]$, then so is $f \oplus g$.
  Observing that all coherence isomorphisms for $(\V,\oplus,0)$ and
  their inverses are built by composition from the above maps and
  their daggers, these are also well-defined morphisms in $\CPsn[\V]$.
  Thus we may conclude that $\CPsn[\V]$ has a symmetric monoidal
  structure $(\oplus,0)$, under which every object has a unique comonoid
  structure. Therefore, the monoidal product is in fact a
  product~\cite[Theorem~2.1]{heunen:semimoduleenrichment}. Moreover,
  because $\CPsn[\V]$ is compact by
  \cite[Theorem~3.4]{coeckeheunenkissinger:cpstar}, products are
  biproducts~\cite{houston:biproducts}. Finally, these 
  biproducts are dagger biproducts because they are so in $\V$.

  First consider $0 \colon A \to 0$. Regardless of the multiplication
  $m_A$, the morphism $0 \colon (A,m_A) \to (0,m_0)$ is trivially a
  *-homomorphism. 

  Next, consider $i_A \colon A \to A \oplus
  0$. Lemma~\ref{lem:nfabiprod} shows 
  $
     m_{A \oplus 0} 
     = i_A \circ m_A \circ (p_A \otimes p_A)$
  and $
     u_{A \oplus 0}
     = i_A \circ u_A$.
  Therefore $m_{A \oplus
    0} \circ (i_A \otimes i_A) = i_A \circ m_A \colon A \otimes A \to
  A \oplus 0$.
  Writing 
  $
  s_A = \lambda_A
  \circ (\varepsilon_A \otimes \id[A])
  \circ (\id[A^*] \otimes m_A^\dag)
  \circ (\id[A^*] \otimes u_A)
  \circ \rho_{A^*}^{-1} \colon A^* \to A
  $
  for the involution and $\lambda_A \colon I \otimes A \to A$ and
  $\rho_A \colon A \otimes I \to A$ for the coherence isomorphisms, 
  one can verify that
  $s_{A \oplus 0} = s_A \oplus 0 \colon A^* \oplus 0 \to A \oplus 0$.
  Hence $s_{A \oplus 0} \circ (i_A)_* = s_{A \oplus 0} \circ i_{A^*} =
  i_A \circ s_A$, making $i_A$ into a *-homomorphism. 

  As to $p_A \colon A \oplus 0 \to A$, the above gives $p_A \circ s_{A \oplus
    0} = s_A \circ p_{A^*} = s_A \circ (p_A)_*$ and $p_A \circ m_{A
    \oplus 0} = m_A \circ (p_A\otimes p_A)$. Hence also $p_A$ is a *-homomorphism.

  Finally, we turn to $\Delta_A \colon A \to A \oplus A$. 
  It follows from Lemma~\ref{lem:nfabiprod} that
  $m_{A \oplus A} \circ (\Delta_A \otimes \Delta_A)
  = \Delta_A \circ m_A$. 
  Furthermore, one verifies $s_{A \oplus A} \circ (\Delta_A)_*
  = \Delta_A \circ s_A$.
  So $\Delta$ is a *-homomorphism, completing the proof.
\end{proof}

Write $\cat{C}^\oplus$ for the biproduct completion of a category
$\cat{C}$.

\begin{corollary}
  If $\V$ is a dagger compact category with dagger biproducts, there
  is a full and faithful functor $\CPM[\V]^\oplus \to \CPs[\V]$.
\end{corollary}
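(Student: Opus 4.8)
The plan is to identify $\CPM[\V]^\oplus$ with a full subcategory of $\CPs[\V]$. Recall from~\cite{coeckeheunenkissinger:cpstar} that the pair-of-pants construction $A\mapsto A^*\otimes A$ gives a full and faithful functor $J\colon\CPM[\V]\to\CPs[\V]$, whose image is the full subcategory of $\CPs[\V]$ spanned by the pants algebras (a morphism of pants algebras in the sense of~(\ref{eqn-f-cp}) is exactly a completely positive map $A^*\otimes A\to B^*\otimes B$, so $J$ is a bijection on each hom-set). By Theorem~\ref{thm:biproducts}, $\CPs[\V]$ has dagger biproducts. Inside $\CPs[\V]$, let $\cat{E}$ be the full subcategory whose objects are the finite biproducts $JA_1\oplus\dots\oplus JA_n$ of pants algebras, the empty biproduct being the zero object. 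The universal property of the free biproduct completion extends $J$ to a biproduct-preserving functor $\CPM[\V]^\oplus\to\cat{E}\hookrightarrow\CPs[\V]$; this composite will be the functor we want, and it remains to check that $\CPM[\V]^\oplus\to\cat{E}$ is full and faithful.

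For that I would compare hom-sets entry by entry. Since $\V$ has biproducts, $\CPM[\V]$ is enriched in commutative monoids: the sum of the completely positive maps induced by $g\colon A\to C\otimes B$ and $g'\colon A\to C'\otimes B$ is the completely positive map induced by $g\oplus g'\colon A\to(C\oplus C')\otimes B$. This is exactly the addition used to build the matrix calculus of $\CPM[\V]^\oplus$, so a morphism $(A_1,\dots,A_m)\to(B_1,\dots,B_n)$ in $\CPM[\V]^\oplus$ is precisely a matrix $(f_{ij})$ with $f_{ij}\in\CPM[\V](A_j,B_i)$, composed by matrix multiplication. On the other side, because $JA_1\oplus\dots\oplus JA_m$ is a biproduct in $\CPs[\V]$, a morphism out of it is determined by its matrix of components $(h_{ij})$ with $h_{ij}\in\CPs[\V](JA_j,JB_i)$, again composed by matrix multiplication against the commutative-monoid structure that biproducts induce on $\CPs[\V]$. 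Full faithfulness of $J$ gives, on each entry, a bijection $\CPM[\V](A_j,B_i)\cong\CPs[\V](JA_j,JB_i)$ respecting identities and the additive structure; assembling these entrywise bijections shows that the extension of $J$ is bijective on hom-sets between objects of $\cat{E}$, hence full and faithful.

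The step I expect to require the most care is purely bookkeeping: checking that the several descriptions of ``addition'' in play coincide, namely the commutative-monoid enrichment of $\CPM[\V]$ coming from biproducts of $\V$, the formal addition used in the construction of $\CPM[\V]^\oplus$, and the enrichment that $\CPs[\V]$ inherits from its own biproducts under $J$; only once these are matched does ``matrix of components'' mean the same thing on both sides, and does composition translate to matrix multiplication on both sides. Granting Theorem~\ref{thm:biproducts} and the embedding $J$ of~\cite{coeckeheunenkissinger:cpstar}, the rest is the routine fact that, for a commutative-monoid-enriched $\cat{C}$ and a category $\cat{D}$ with biproducts, the biproduct-preserving extension $\cat{C}^\oplus\to\cat{D}$ of a full and faithful additive functor $\cat{C}\to\cat{D}$ is again full and faithful, together with the observation that $\cat{E}$ inherits all its biproduct and coherence data from $\CPs[\V]$. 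As a bonus, since $J$ is dagger symmetric monoidal and all the structure involved is built from dagger biproducts, the resulting functor $\CPM[\V]^\oplus\to\CPs[\V]$ is dagger symmetric monoidal as well.
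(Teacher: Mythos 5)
Your proposal is correct and follows essentially the same route as the paper: both invoke the full and faithful embedding $\CPM[\V]\to\CPs[\V]$ from~\cite{coeckeheunenkissinger:cpstar}, the fact that $\CPs[\V]$ has (dagger) biproducts from Theorem~\ref{thm:biproducts}, and the universal property of the biproduct completion to produce the lift. The paper simply asserts that the lifted functor is full and faithful, whereas you supply the standard entrywise matrix-of-components argument justifying this; that is a welcome elaboration, not a different proof.
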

\begin{proof}
  \cite[Theorem~4.3]{coeckeheunenkissinger:cpstar} gives a full and
  faithful functor $\B \colon \CPM[\V] 
  \to \CPs[\V]$. Theorem~\ref{thm:biproducts} shows that $\CPs[\V]$
  has biproducts. Thus the universal property of $\CPM[\V]^\oplus$
  guarantees that $L$ lifts to a functor $\CPM[\V]^\oplus \to
  \CPs[\V]$ that is full and faithful.
\end{proof}

\begin{example}
  By~\cite[Proposition~3.5]{coeckeheunenkissinger:cpstar},
  $\CPs[\FHilb]$ is the 
  category of finite-dimensional C*-algebras and completely positive
  maps. Similarly, $\CPM[\FHilb]$ can be identified with the full
  subcategory of finite-dimensional C*-\emph{factors}, i.e. matrix
  algebras $\M_n$. Because any finite-dimensional C*-algebra is a
  direct sum of matrix
  algebras~\cite[Theorem~III.1.1]{davidson:cstar}, the functor of the
  previous corollary is an equivalence between the categories 
  $\CPs[\FHilb]$ and $\CPM[\FHilb]^\oplus$.
\end{example}

\begin{example}
  By~\cite[Proposition~5.3]{coeckeheunenkissinger:cpstar}, $\CPs[\Rel]$ is the
  category of (small) groupoids and relations respecting
  inverses. Similarly,
  by~\cite[Proposition~5.4]{coeckeheunenkissinger:cpstar},
  $\CPM[\Rel]$ can be identified with the full subcategory of
  indiscrete (small) groupoids. But there exist groupoids that are not
  isomorphic to a disjoint union of indiscrete ones in $\CPs[\Rel]$. 
  For example, groupoids isomorphic to $\mathbb{Z}_2$ in
  $\CPs[\Rel]$ must have a single object and two
  morphisms, and therefore cannot be a disjoint union of indiscrete groupoids.
  Hence the functor $\CPM[\Rel]^\oplus \to \CPs[\Rel]$ of the previous
  corollary is not an equivalence, and in fact there cannot be an
  equivalence between $\CPM[\Rel]^\oplus$ and $\CPs[\Rel]$.
\end{example}

\bibliographystyle{eptcs}
\bibliography{cpproj}

\end{document}